\DeclareMathOperator{\Res}{Res}
\theoremstyle{plain}
\newtheorem{theorem}{Theorem}[section]
\newtheorem*{theorem*}{Theorem}
\newtheorem{proposition}[theorem]{Proposition}
\newtheorem{corollary}[theorem]{Corollary}
\newtheorem{lemma}[theorem]{Lemma}
\theoremstyle{definition}
\newtheorem{definition}[theorem]{Definition}
\newtheorem{remark}[theorem]{Remark}
\newcommand{\enm}[1]{\ensuremath{#1}}          %
\newcommand{\cal}[1]{\mathcal{#1}}
\renewcommand{\bar}[1]{\overline{#1}}
\newcommand{\CC}{\enm{\mathbb{C}}}
\newcommand{\NN}{\enm{\mathbb{N}}}
\newcommand{\RR}{\enm{\mathbb{R}}}
\newcommand{\ZZ}{\enm{\mathbb{Z}}}
\newcommand{\FF}{{\enm{\mathbb{F}}}}
\newcommand{\Cc}{\enm{\cal{C}}}
\newcommand{\Ii}{\enm{\cal{I}}}
\newcommand{\Ll}{\enm{\cal{L}}}
\newcommand{\Oo}{\enm{\cal{O}}}
\newcommand{\Pp}{\enm{\cal{P}}}
\newcommand{\Tt}{\enm{\cal{T}}}
\DeclareMathOperator{\bdeg}{bdeg}
\renewcommand{\phi}{\varphi}
\renewcommand{\theta}{\vartheta}
\renewcommand{\epsilon}{\varepsilon}
\def\8{\infty}
\def\ge{\geqslant}
\def\le{\leqslant}
\def\PP{\mathbb{P}}
\def\PPPP{{\PP^{2}\times\PP^{2}}}
\def\Res{\mathop{\mathrm{Res}}}
\def\Pic{\mathop{\mathrm{Pic}}}
\let\oldtocsubsection=\tocsubsection
\renewcommand{\tocsubsection}[2]{\hskip15pt\oldtocsubsection{#1}{#2}}
\begin{document}

\title[Surfaces in $\mathbb{F}$ containing
smooth conics and twistor fibers]{Surfaces in the flag threefold  containing\\
smooth conics and twistor fibers
}

\author[A. Altavilla]{Amedeo Altavilla}\address{Dipartimento di Matematica,
  Universit\`a degli Studi di Bari `Aldo Moro', via Edoardo Orabona, 4, 70125,
  Bari, Italia}\email{amedeo.altavilla@uniba.it}

\author[E. Ballico]{Edoardo Ballico}\address{Dipartimento Di Matematica,
  Universit\`a di Trento, Via Sommarive 14, 38123, Povo, Trento, Italia}
\email{edoardo.ballico@unitn.it}

\author[M. C. Brambilla]{Maria Chiara Brambilla}\address{
Universit\`a Politecnica delle Marche, via Brecce Bianche, I-60131 Ancona, Italia}
\email{brambilla@dipmat.univpm.it}

\thanks{Partially supported by GNSAGA of INdAM and by the INdAM project `Teoria
  delle funzioni ipercomplesse e applicazioni'}

%\date{\today}

\subjclass[2010]{Primary: 32L25, 14M15; Secondary: 14D21, 14J26} %%, @@@@@@53C15, 53C28, 14J10, 15A21}
\keywords{flag threefold, twistor projection, twistor fiber, surfaces, bidegree}

\begin{abstract} 
We study smooth integral curves of bidegree $(1,1)$, called \textit{smooth conics}, in the flag threefold $\mathbb{F}$.
The study is motivated by the fact that the family of smooth conics
contains the set of fibers of the twistor projection $\mathbb{F}\to\mathbb{CP}^{2}$.
We give a bound on the maximum number of smooth conics contained in
a smooth surface $S\subset\mathbb{F}$. Then, we show qualitative properties of algebraic surfaces containing a prescribed number of
smooth conics.
Lastly, we study surfaces containing infinitely many twistor fibers. We show
that the only smooth cases are surfaces of bidegree $(1,1)$. Then, for any
integer $a>1$, we exhibit a method to construct an integral surface
of bidegree $(a,a)$ containing infinitely many twistor fibers.
\end{abstract}

\maketitle
%\rosso{INSERIRE keywords, subjclass}
\setcounter{tocdepth}{1} % Show sections

\tableofcontents

\section{Introduction}

Let $\FF\subset \PP^2\times\PP^{2\vee}$ be the complex flag variety of points and lines contained in a plane. 
In this paper we investigate integral {(i.e.\ irreducible and reduced)} smooth curves of bidegree $(1,1)$, called \textit{smooth conics}, contained in a surface $S\subset \FF$.
The interest in this subject emerges from the fact that this family of
curves contains the set of fibers of the standard twistor projection
$\pi:\FF\to\PP^{2}$ \cite{AHS, Hitchin, ABBS}.
In fact, from twistor theory, the fibers of $\pi$ can be characterized to be the $j$-invariant rational smooth curves with normal bundle $\Oo(1)\oplus\Oo(1)$, 
where $j:\FF\to\FF$ is an anti-holomorphic involution involved in the twistor construction (see Formula~\eqref{mapj}).

Continuing from~\cite{ABBS} and inspired from~\cite{altavillaballico1, altavillaballico3}, we aim to obtain results on the maximum number of smooth
conics that can be contained in a smooth surface, on the geometry of the
generic surface containing a certain amount of smooth conics and a 
description of the surfaces containing infinitely many twistor fibers.

In the preliminary section, we collect 
some known results on $\FF$ and its subvarieties. %, which are necessary in order to obtain our results. 
In particular,
we define the bidegree of a curve and of a surface in $\FF$, we define
 smooth conics and twistor fibers and we present a detailed study of
the linear sections, the $(1,0)$ and $(0,1)$ surfaces, which are
Hirzebruch surfaces of type $1$. For other relevant geometry of the flag we refer
to~\cite{ABBS}.

In Section~\ref{bound}, we give a bound on the number of disjoint smooth conics contained in a surface $S$ of bidegree $(a,b)$ with both $a$ and $b$ greater or equal than $3$. Explicitly,  {the bound computed in  Theorem \ref{s1} is:}
$$
\frac{2 (a + b - 2) (3 a^2 b - a^2 + 3 a b^2 - 4 a b + 3 a - b^2 + 3 b)}{(a + b - 1)^2}.
$$ 
Since twistor fibers are disjoint smooth conics, the same bound also holds for them. The proof is 
based on a classical result of Miyaoka~\cite{miyaoka} and it is obtained
by means of formula involving Chern classes of a surface.
%In particular if $S\subset\FF$ is a smooth surface, we explicitly compute $c_{2}(S)$ (see Proposition~\ref{c2S}). 
The same proof can also be adapted in order to compute
the maximum number of curves of bidegree $(1,0)$ or $(0,1)$ contained in a smooth surface (see Remark~\ref{3.7}).

We expect that if $m$ is the maximal number of disjoint smooth conics contained in a smooth surface $S\subset \FF$, then not all the integers $0\le k\le m$ %can be obtain as the numbers of the smooth conics contained in a smooth surface. 
{are realized by some smooth surface of bidegree $(a,b)$, at least for high $a$ and $b$. So an interesting open question is to find the maximum integer $m'\le m$ such that for all $0\le k\le m'$, there exists a smooth surface which contains $k$ disjoint smooth conics.
}

In Section~\ref{prescribed}, as a second main result, we give a range for an integer $x$  such that
there exists an irreducible smooth surface in the flag which contains precisely $x$ disjoint smooth conics.
More precisely we have the following theorem, in which we denote by $|\Ii_{T,\FF}(a,b)|$ the linear system of the surfaces of bidegree $(a,b)$ contained in $\FF$ that contains a set $T$ of disjoint conics.

\begin{theorem}\label{c1-seconda parte}
Fix integers $b\ge a\ge2$ and $0\le x\le (a-1)(a-2)/2$, and let $T\subset \FF$ be a general union of $x$ smooth conics.
Then:
\begin{enumerate}
\item[(i)] a general $S\in |\Ii_{T,\FF}(a,b)|$ does not contain any smooth conic $C$ such that $C\cap T=\emptyset$;
\item[(ii)] a general $S\in |\Ii_{T,\FF}(a,b)|$ is irreducible and smooth.
\end{enumerate}
\end{theorem}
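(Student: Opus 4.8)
The whole argument rests on a postulation statement for general unions of disjoint conics, which I take as available (it is the natural companion vanishing result, proved by a Horace-type degeneration in the stated range): for $b\ge a\ge2$ and $T$ a general union of $x\le (a-1)(a-2)/2$ smooth conics one has $H^1(\FF,\Ii_{T,\FF}(a,b))=0$, so that the conics impose independent conditions, $|\Ii_{T,\FF}(a,b)|$ has the expected dimension, and its base scheme is exactly $T$. Granting this, the plan is to deduce (ii) by a Bertini argument refined along $T$, and (i) by a dimension count on an incidence variety of surfaces and conics.

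For (ii), I would first note that, the base scheme being $T$, the system $|\Ii_{T,\FF}(a,b)|$ is base-point-free on $\FF\setminus T$, so Bertini gives that a general $S$ is smooth away from $T$. The point is then smoothness along each conic $C\subset T$. A member $S$ is singular at $p\in C$ exactly when the image of its defining section in the conormal twist $N^\vee_{C/\FF}(a,b):=N^\vee_{C/\FF}\otimes\Oo_C(a,b)$ vanishes at $p$; hence it suffices that the restriction $H^0(\Ii_{T,\FF}(a,b))\to H^0(C,N^\vee_{C/\FF}(a,b))$ be surjective onto a globally generated bundle, for then a general section is nowhere zero on $C$. By adjunction, since $-K_\FF=\Oo_\FF(2,2)$ and $C$ has bidegree $(1,1)$, one gets $\deg N_{C/\FF}=2$, and as $T$ is general its conics have balanced normal bundle $\Oo(1)\oplus\Oo(1)$, so $N^\vee_{C/\FF}(a,b)=\Oo_{\PP^1}(a+b-1)^{\oplus2}$, globally generated because $a+b-1\ge3$. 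Surjectivity of the restriction follows from the vanishing of $H^1$ of the ideal sheaf of $T$ with $C$ thickened to first order, which lies in the same range. Finally $\Oo_\FF(a,b)$ is ample for $a,b\ge1$, so each $S$ is a connected divisor on the threefold $\FF$; a smooth connected surface is irreducible, which completes (ii).

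For (i), let $\Sigma$ denote the Hilbert scheme of smooth conics. For any smooth conic $N_{C/\FF}$ has rank $2$, degree $2$ and $H^1=0$, so $\Sigma$ is smooth of dimension $h^0(N_{C/\FF})=4$ at $[C]$. I would form the incidence variety
\[
I=\{(C,S)\ :\ C\in\Sigma,\ C\cap T=\emptyset,\ C\subset S\}\subset\Sigma\times|\Ii_{T,\FF}(a,b)|,
\]
with projections $p_1,p_2$. Over a general such $C$ the fibre of $p_1$ is $|\Ii_{T\cup C,\FF}(a,b)|$, of dimension $\dim|\Ii_{T,\FF}(a,b)|-(a+b+1)$, because $C$ imposes the full $a+b+1=h^0(\Oo_C(a,b))$ independent conditions. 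Hence
\[
\dim I\le 4+\dim|\Ii_{T,\FF}(a,b)|-(a+b+1)=\dim|\Ii_{T,\FF}(a,b)|-(a+b-3),
\]
and $a+b-3\ge1$ since $a,b\ge2$. Therefore $p_2$ is not dominant, so a general $S\in|\Ii_{T,\FF}(a,b)|$ lies off $p_2(I)$ and contains no smooth conic disjoint from $T$, giving (i).

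The delicate points, and where the hypothesis $x\le(a-1)(a-2)/2$ really enters, are the $H^1$-vanishings. For (i) one must ensure the bound survives when the extra conic $C$ is added, so that a general $C$ genuinely imposes $a+b+1$ independent conditions on $|\Ii_{T,\FF}(a,b)|$ and the special conics imposing fewer conditions sweep out a locus too small to violate $\dim I<\dim|\Ii_{T,\FF}(a,b)|$; for (ii) the analogous vanishing for the first-order thickening of a conic is needed. I expect this uniform postulation of (possibly thickened) general unions of conics to be the main obstacle; the two geometric conclusions then follow formally from the dimension counts above.
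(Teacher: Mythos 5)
Your architecture is essentially the paper's: part (i) is proved there by exactly your dimension count (every conic disjoint from $T$ cuts out a codimension-$(a+b+1)$ subsystem, and $a+b+1>4=\dim\Cc(1)$), and part (ii) by Bertini off $T$ plus a local analysis along $T$. But the steps you defer as ``the delicate points'' are not side issues — they are the actual content of the paper's proof, and as stated your reductions are not quite the ones that get proved. Concretely: (a) For (i) you only claim that a \emph{general} $C$ imposes $a+b+1$ independent conditions and then wave at the special conics. The bound $\dim I\le 4+\dim|\Ii_{T,\FF}(a,b)|-(a+b+1)$ needs control of \emph{every} fibre of $p_1$; the paper closes this by proving $h^1(\Ii_{T\cup C,\FF}(a,b))=0$ for an \emph{arbitrary} smooth conic $C$ disjoint from $T$, via the residual exact sequence with respect to the $(1,0)$-surface $X$ through $C$, using the identification $\Ii_{C,X}(a,b)\cong\Oo_X(a,b-1)$ on the Hirzebruch surface $F_1$ and Lemma \ref{h0X}; the hypothesis $x\le(a-1)(a-2)/2$ (rather than $a(a-1)/2$) is exactly what makes the residual term $h^1(\Ii_{T,\FF}(a-1,b))$ vanish by Theorem \ref{c1-prima parte}. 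Without this, your incidence count is not complete. (b) Your assertion that the base scheme of $|\Ii_{T,\FF}(a,b)|$ is exactly $T$ does not follow from $h^1(\Ii_{T,\FF}(a,b))=0$ alone; the paper obtains global generation of $\Ii_{T,\FF}(a,b)$ from the Castelnuovo--Mumford lemma, which requires the further vanishings $h^i(\Ii_{T,\FF}(a-i,b-i))=0$.

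For (ii) along $T$, your sufficient condition — surjectivity of $H^0(\Ii_{T,\FF}(a,b))\to H^0(C,N^\vee_{C/\FF}(a,b))$ — is strictly stronger than what is needed and than what the paper proves. Since $C$ is one-dimensional, it suffices that for each $p\in C$ the singular-at-$p$ locus has codimension $\ge 2$ in $|\Ii_{T,\FF}(a,b)|$, i.e.\ that $H^0(\Ii_{T,\FF}(a,b))\to N^\vee_{C/\FF}(a,b)\otimes k(p)$ is onto for each $p$; this is equivalent to $h^1(\Ii_{T\cup 2p,\FF}(a,b))=0$, which the paper establishes by a residual sequence with respect to the $(0,1)$-surface containing $C$, again reducing to Theorem \ref{c1-prima parte} and to very ampleness of $\Ii_{C,Y}(a,b)\cong\Oo_Y(a-1,b)$. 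Your full surjectivity would require the vanishing of $h^1$ for the ideal of $T$ with the whole conic $C$ thickened to first order, which is not proved anywhere in the paper and would need a separate (harder) Horace-type argument. So the route is right in outline, but you should replace the global surjectivity requirement by the pointwise one, and you must actually supply the residual-sequence computations on the $F_1$'s; as written, the proof has a genuine hole precisely where the numerical hypothesis on $x$ does its work.
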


The proof of such a result is obtained by means of the classical {\it Horace method}, \cite{HH}.
Moreover in the same section we prove a unicity result
for surfaces in $\FF$ which contain a given amount of smooth conics,
see Proposition \ref{a0}.

Finally, in Section~\ref{infinite-sec}, we study surfaces in $\FF$ containing infinitely many twistor fibers.
More precisely, in Proposition \ref{f4} we prove that if the singular locus of the surface is finite, then a surface containing infinitely many twistor fibers must be a smooth surface of bidegree $(1,1)$. On the other hand,
in Theorem \ref{infinite} we provide a construction of singular surfaces of bidegree $(a,a)$ for any $a\ge2$ containing infinitely many twistor lines. In particular, for any $n\le a-1$, we give examples of surfaces $S$ of bidegree $(a,a)$ for which the family of twistor lines 
contained in $S$ is parametrized by 
$n$ disjoint circles,
up to finitely many identifications and finitely many isolated points.

Collecting the results of the final section, we get that, for any positive integer $a$ there exists a surface of bidegree $(a,a)$ containing infinitely many twistor lines, belonging to a continuous family. 
These results resemble the behavior of the other basic example of algebraic twistor space, which is $\PP^{3}$ projecting onto $S^{4}$. In that case, considering the results in~\cite{sv, altavillaballico3}, we obtained that the only smooth case of surface containing infinitely many twistor fibers is that of quadrics, while the other admissible cases are only ruled surfaces of even degree. Here, in a sense, the analogy is complete, as the ``total degree'' in the Segre embedding of
a surface of bidegree $(a,b)$ is $a+b$.

\section{Preliminaries}\label{preliminaries}
In the whole paper we will take advantage of notations and basic facts already introduced in~\cite{ABBS}. Nevertheless, we
recall here briefly what is needed in order to be as more self-contained as possible. 

Let us denote by $\PP^{n}=\PP(\CC^{n+1})$ the complex projective space. {Given a complex projective variety $V$, we write $V_\RR$ for its real locus.} 
In the paper we will identify $\PP^2$ and its dual $\PP^{2\vee}$, both on $\RR$ and on $\CC$. This is possible because, if $P$ is a real algebraic
variety whose complexification is isomorphic to $\PP^{2}$, then $P$ is isomorphic to the real projective plane $\PP_\RR^2$ by \cite[Prop. VI (1.1) p. 109]{silhol}.

We will need in the \textit{Segre variety}  $\Sigma=\sigma(\PP^{2}\times \PP^{2})\subset \PP^{8}$, which 
is the image of the Segre embedding $\sigma:\PP^{2}\times \PP^{2}\to\PP^{8}$, defined as follows:
$$
(p,\ell)=([p_{0}:p_{1}:p_{2}],[\ell_{0}:\ell_{1}:\ell_{2}])
\mapsto \left[\begin{matrix} \ell_{0}\\ \ell_{1}\\ \ell_{2}
\end{matrix}
\right]\cdot[p_{0}:p_{1}:p_{2}]=\left[\begin{matrix} p_{0}\ell_{0} & p_{1}\ell_{0} & p_{2}\ell_{0}\\
p_{0}\ell_{1} & p_{1}\ell_{1} & p_{2}\ell_{1}\\
p_{0}\ell_{2} & p_{1}\ell_{2} & p_{2}\ell_{2}
\end{matrix}
\right].
$$
Recall that $\Sigma$ is
a $4$-fold of degree $6$ contained in $\PP^{8}$.

We denote the standard projections by $\Pi_{1},\Pi_{2}:\PPPP\to\PP^{2}$, where $\Pi_{1}(p,\ell)=p$ and $\Pi_{2}(p,\ell)=\ell$. %The fibers of $\Pi_{1}$ and $\Pi_{2}$ are linear sections of codimension 2 in the Segre variety.
For any $d_{1},d_{2}\in\ZZ$, we set $\Oo_{\PPPP}(d_{1},d_{2})=\Pi_{1}^{*}\Oo_{\PP^{2}}(d_{1})\otimes\Pi_{2}^{*}\Oo_{\PP^{2}}(d_{2})$. Recall that 
$\Pic(\PPPP)\simeq\ZZ^{2}$ and it is freely generated as an abelian group by
$\Oo_{\PPPP}(1,0)$ and $\Oo_{\PPPP}(0,1)$. 
The canonical line bundle is
$
\omega_{\PPPP}\simeq\Oo(-3,-3).
$

%
%\rosso{***********io salterei tutta questa parte}
%If $\Rr_{1}:=\CC[p_{0}:p_{1}:p_{2}]$ and $\Rr_{2}:=[\ell_{0}:\ell_{1}:\ell_{2}]$ denote the vector spaces of homogeneous polynomials in the variables $p_{i}$ and $\ell_{j}$, respectively, then, in view of the Segre embedding, we 
%define $\Pp:=\Rr_{1}\otimes_{\CC}\Rr_{2}$. Clearly, $\Rr_{1}$ and $\Rr_{2}$
%are graded algebra and we set $\Rr_{m,n}$, for $m=1,2$ and $n\in\NN$, to be the subspace of degree $n$ polynomials. Then,  if $d_{1},d_{2}\in\NN$ we can define 
%$\Pp_{d_{1},d_{2}}:=\Rr_{1,d_{1}}\otimes_{\CC}\Rr_{2,d_{2}}$ and hence
%$\Pp=\oplus_{(d_{1},d_{2})\in\NN^{2}}\Pp_{d_{1},d_{2}}$ and 
%$\dim \Pp_{d_{1},d_{2}}=\binom{d_{1}+2}{2} \binom{d_{2}+2}{2}=1/4 (d_{1}^{2} + 3 d_{1} + 2) (d_{2}^{2} + 3 d_{2} + 2)$
%\rosso{**********}

The {flag threefold} in $\PP^{2}\times \PP^{2\vee}$ is defined geometrically as the set of couples $(p,\ell)$ such that $p\in\ell$. %%We will not make use of this geometric interpretation. Instead 
However, we will mainly use the following algebraic definition as {incidence variety} in $\PP^2\times \PP^2$. %as it is more suitable for our computations. 
%For a detailed description of the geometry of the flag threefold see~\cite{ABBS}. 
Let us denote the elements of the first $\PP^{2}$ as rows, while those of the second one as columns, i.e.: if $(p,\ell)\in\PPPP$, then
$$
p=[p_{0}:p_{1}:p_{2}],\quad \ell=[\ell_{0}:\ell_{1}:\ell_{2}]^{\top}=\left[\begin{matrix}\ell_{0}\\ \ell_{1}\\ \ell_{2}
\end{matrix}
\right],
$$
so that $p\ell=p_{0}\ell_{0}+p_{1}\ell_{1}+p_{2}\ell_{2}$.
\begin{definition}
The \textit{flag threefold} is the algebraic subvariety of $\PPPP$ defined by
$$
\FF:=\{(p,\ell)=([p_{0}:p_{1}:p_{2}],[\ell_{0}:\ell_{1}:\ell_{2}]^{\top})\in\PPPP\,|\,p\ell=p_{0}\ell_{0}+p_{1}\ell_{1}+p_{2}\ell_{2}=0\}.
$$
\end{definition}
We define the standard projections $\pi_{1},\pi_{2}:\FF\to\PP^{2}$ as $\pi_{1}:=\Pi_{1|\FF}$ and $\pi_{2}:=\Pi_{2|\FF}$, and we set
$
\Oo_{\FF}(d_{1},d_{2}):=\pi_{1}^{*}\Oo(d_{1})\otimes\pi_{2}^{*}\Oo(d_{2}).
$

Since $\FF$ is an effective divisor belonging to $|\Oo_{\PPPP}(1,1)|$, by the adjunction formula we have 
\begin{equation}\label{omegaF}
\omega_{\FF}\simeq\Oo_{\FF}(-2,-2).
\end{equation}

%\rosso{****anche qui toglierei tutto}
%The flag manifold is clearly a degree $2$ threefold in the Segre variety.
%From the definition, we clearly have that, for any $(p,\ell)\in\FF$, $\pi_{1}(p,\ell)=p$ and $\pi_{2}(p,\ell)=\ell$. These two projections are locally trivial $\PP^{1}$-bundles and, for $q,m\in\PP^{2}$, we have that 
%$$
%\pi_{1}^{-1}(q)=\{(q,\ell)\in\PPPP\,|\,(q,\ell)\in\FF\},\quad\pi_{2}^{-1}(m)=\{(p,m)\in\PPPP\,|\,(p,m)\in\FF\}.
%$$
%
%The bi-homogeneous polynomial $p\ell\in\Pp_{1,1}$ generates a principal ideal in $\Pp$. Therefore, if we set $\Ss:=\Pp/(p\ell)$, we inherit the double bi-graduation, i.e. $\Ss$ is also bi-graded and $\Ss=\oplus_{(d_{1},d_{2})\in\NN^{2}}\Ss_{d_{1},d_{2}}$. Following~\cite[Section 2.1]{ABBS}, if we set
%$$
%\Oo_{\FF}(d_{1},d_{2}):=\pi_{1}^{*}\Oo(d_{1})\otimes\pi_{2}^{*}\Oo(d_{2}),
%$$
%we have that any line bundle on $\FF$ is isomorphic to $\Oo_{\FF}(d_{1},d_{2})$ for suitable $d_{1},d_{2}\in\ZZ$. If we denote by 
%$|\Oo_{\PPPP}(d_{1},d_{2})|$ the projective space associated to
%$H^{0}(\Oo_{\PPPP}(d_{1},d_{2}))$, then $F$ is an effective divisor belonging to $|\Oo_{\PPPP}(1,1)|$. In complete analogy as before we denote by $|\Oo_{\FF}(d_{1},d_{2})|$ the projective space associated to
%$H^{0}(\Oo_{\FF}(d_{1},d_{2}))$, hence, from $\omega_{\PPPP}\simeq\Oo_{\PPPP}(-3,-3)$, the adjunction formula, gives $\omega_{\FF}\simeq\Oo_{\FF}(-2,-2)$.
%\rosso{****fino a qui***}

We now  describe some geometry of curves and surfaces in $\FF$.
%We will make use of the following standard notation. 
Given a line bundle $\mathcal L$ on a projective variety $V$ we denote by $|\mathcal L|=\PP(H^0(V,\mathcal L))$.

Let us start by defining the bidegree of a surface and of a curve in $\FF$.
\begin{definition}
Let $S\subset\FF$ be an algebraic surface in $|\Oo_{\FF}(d_{1},d_{2})|$. Then we say that $S$ has \textit{bidegree} $(d_{1},d_{2})$, and write
$\bdeg(S)=(d_{1},d_{2})$.
\end{definition}

We recall the
multiplication rules in the Chow ring: by~\cite[Proposition 3.11]{ABBS}, we have that
\begin{align*}
\Oo_{\FF}(1,0)\cdot\Oo_{\FF}(1,0)\cdot\Oo_{\FF}(1,0)=0,\\
\Oo_{\FF}(1,0)\cdot\Oo_{\FF}(0,1)\cdot\Oo_{\FF}(1,0)=1,\\
\Oo_{\FF}(0,1)\cdot\Oo_{\FF}(1,0)\cdot\Oo_{\FF}(0,1)=1,\\
\Oo_{\FF}(0,1)\cdot\Oo_{\FF}(0,1)\cdot\Oo_{\FF}(0,1)=0.
\end{align*}

While for surfaces the notion of bidegree is quite natural, in the case of curves we need a little bit more of elaboration.

\begin{definition}
Let $C\subset \FF$ be an algebraic curve. We define its {\it bidegree} as
$\bdeg(C)=(d_1,d_2)$, where if $\pi_i(C)=\{x\}$, then $d_{i}=0$, otherwise
$d_{i}=a_{i} b_{i}$ with $a_i=\deg(\pi_i(C))$ and $b_i=\deg(\pi_{i} |_{C})$.
\end{definition}

From the definition we have that $\pi_{1}^{-1}(p)$ has bidegree $(0,1)$, while
$\bdeg(\pi_{2}^{-1}(\ell))=(1,0)$. Moreover, all curves of bidegree $(0,1)$ (or $(1,0)$) can be represented as $\pi_{1}^{-1}(p)$ for some $p\in\PP^{2}$ (respectively, as $\pi_{2}^{-1}(\ell)$ for some $\ell\in\PP^{2}$).
 This notion of bidegree for curves is related to that of surfaces by means of general intersections. 

\begin{remark}\label{curved1d2}
Any curve $C$ of bidegree $(0,1)$ is obtained as the intersection of 
two surfaces of bidegree $(1,0)$ (see~\cite[Section 3.2]{ABBS}), i.e. $C$ is a class of type $\Oo_{\FF}(1,0)\cdot \Oo_{\FF}(1,0)$ in the Chow ring. Therefore, if $C$ has bidegree $(d_{1},d_{2})$, then it belongs to the class $d_{2}\Oo_{\FF}(1,0)\cdot \Oo_{\FF}(1,0)+d_{1}\Oo_{\FF}(0,1)\cdot \Oo_{\FF}(0,1)$.
Moreover, thanks to~\cite[Proposition 3.11]{ABBS}, we have that if $C$ has bidegree $(d_{1},d_{2})$, if and only if
$C\cdot\Oo_{\FF}(1,0)=d_{1}$ and $C\cdot\Oo_{\FF}(0,1)=d_{2}$.
\end{remark}

It was shown in~\cite[Remark 3.2]{ABBS} that if a curve $C$ has bidegree $(d_{1},d_{2})$ with $d_{i}=1$ (for some $i=1,2$), then $C$ is rational. A particularly interesting case is that of curves of bidegree $(1,1)$ as this family contains the set of twistor fibers. All these curves can be described as
$$
L_{q,m}:=\{(p,\ell)\in\FF\,|\,pm=0, q\ell=0\}.
$$
If $qm\neq0$, then $L_{q,m}$ is smooth and irreducible, while if $qm=0$, then
$L_{qm}=\pi_{1}^{-1}(q)\cup\pi_{2}^{-1}(m)$. 
Any curve $L_{q,m}$ has degree $2$ in the Segre embedding as it can be 
seen as a linear section of a smooth quadric.
We refer the reader to~\cite[Section 3.1]{ABBS} for a detailed analysis of these curves.
For the convenience of what follows we will set the following definition.

\begin{definition}
Let $L_{q,m}\subset\FF$ be a bidegree $(1,1)$ curve. If $qm\neq0$, then $L_{q,m}$ is said to be a \textit{smooth conic}.
The set of smooth conics will be denoted by $\Cc$.
\end{definition}

\begin{remark}\label{c5}
Fix a curve $F$ of bidegree $(1,0)$ and a curve $G$ of bidegree $(0,1)$ such that $F\cap G\ne \emptyset$. Note that $F\cap G$ is a unique point and that $F\cup G$ is a flat limit of a family of smooth conics, \cite[Section 2.1]{ABBS}. 
\end{remark}

From the definition of $L_{q,m}$, it is clear that the family of bidegree $(1,1)$
curves is parametrized by $\PP^{2}\times\PP^{2}$ and $\Cc$ is parametrized by $(\PP^{2}\times \PP^{2})\setminus \FF$.

As said before, $\Cc$ contains the set of twistor fibers $\pi^{-1}(q)$, where the twistor map 
$$
\pi: \FF\xrightarrow{}\PP^{2}
$$
is defined as $\pi(p,\ell)=\bar p\times \ell$ (see~\cite[Section 5]{ABBS} for more details).
If we denote by $j:\FF\to\FF$ the antiholomorphic involution map 
\begin{equation}\label{mapj}
j(p,\ell)=(\bar\ell,\bar p),
\end{equation}
then a twistor fiber is an integral  $(1,1)$ curve that is $j$-invariant.
In terms of $L_{q,m}$, we have that $j(L_{q,m})=L_{q,m}$ if and only if $m=\bar q$.
\begin{definition}
The set of twistor fibers will be denoted by $\Tt$.
\end{definition}

As said before $\Tt\subset\Cc$, but even more $\Tt$ is a Zariski dense in $
\Cc$ (see~\cite[Lemma 5.6]{ABBS}). 
%Moreover, by following the approach in~\cite[Lemma 3.2]{altavillaballico1}, it is possible to prove that, for any integer $k\ge 2$, $\Tt^{k}$ is dense in $\Cc^{k}$.
\medskip

We now turn our attention to surfaces in $\FF$.
A first study of surfaces of bidegree $(1,0)$ and $(0,1)$ was carried out in~\cite[Section 3.2]{ABBS}. We start with the following definition.
\begin{definition}
Let $q,m\in\PP^{2}$. We define the following  surfaces in $\FF$:
\begin{align*}
H_{m}&:=\{(p,\ell)\in\FF\,|\,pm=0\}\in|\Oo_{\FF}(1,0)|,\\
_{q}H&:=\{(p,\ell)\in\FF\,|\,q\ell=0\}\in|\Oo_{\FF}(0,1)|.
\end{align*}
\end{definition}

We will not really need, in the current analysis, the nature of the $(1,0)$ or $(0,1)$ surfaces as $H_{m}$ or $ _{q}H$. In this view, we will denote throughout the paper
an element of $|\Oo_{\FF}(1,0)|$ as $X$ and an element of $|\Oo_{\FF}(0,1)|$ as $Y$.

\begin{remark}\label{intersezioni}
Recall that two general surfaces $X,X'\in|\Oo_{\FF}(1,0)|$ intersect in a fiber of $\pi_{1}$, while a general surface $X$ of bidegree $(1,0)$ and a general surface $Y$ of bidegree $(0,1)$ intersect in a smooth conic. Hence a general surface $X$ of bidegree $(1,0)$ (or $(0,1)$) intersects a general  conic in a general point, by \cite[Proposition 3.11]{ABBS}. 
\end{remark}

In~\cite[Section 3.2]{ABBS} we gave several geometric descriptions of 
these families of surfaces, noticing that any surface of bidegree $(1,0)$ is
a surface of type $H_{m}$ and any of bidegree $(0,1)$ is of type $ _{q}H$.
Moreover, as $H_{m}=\pi_{1}^{-1}(\pi_{1}(\pi_{2}^{-1}(m)))$, it is easy to see that $\pi_{2|H_{m}}:H_{m}\to\PP^{2}$ represents the blow-up of $\PP^{2}$ at $m$ (and analogously for $ _{q}H$ at $q$).
In particular, in~\cite[Corollary 3.10]{ABBS}, we proved that these are Hirzebruch surfaces of type $1$, usually denoted by $F_{1}$. 
For the purpose of this paper we will need to 
deepen their description as Hirzebruch surfaces.

Recall that $\Pic(F_1)=\ZZ h\oplus\ZZ f$, where 
$f$ is the fiber of the ruling of $F_1$ and $h$ is the section with self-intersection $-1$. 
and that $h^2=-1$, $f^2=0$ and $hf=1$. 
%\rosso{Notice that a curve in the class of $h$ has bidegree $(1,0)$ and a curve in the class of $f$ has bidegree $(0,1)$.}
 
Now, given a surface $X$ of bidegree $(1,0)$, 
up to  identification of $X$ to  $F_1$, we get
that the line bundle $\Oo_X(1,0)$ corresponds to
$\Oo_{F_1}(f)$ (and to curves contained in $X$ of bidegree (0,1) in $\FF$) and the line bundle $\Oo_X(0,1)$
corresponds to $\Oo_{F_1}(h+f)$ (and to elements of $\Cc$). Hence
we get, for any $a,b\in\ZZ$ and  for any $\alpha,\beta\in\ZZ$:
\begin{equation}\label{coeff}
\Oo _X(a,b) \cong \Oo_{F_1}(bh+(a+b)f),\quad \mbox{ and }\quad  \Oo_{F_1}(\alpha h+\beta f)\cong \Oo _X(\beta-\alpha ,\alpha).\end{equation}

Clearly if $Y$ is a surface of bidegree $(0,1)$, identified again with a Hirzebruch surface $F_1$, we have analogously that
 $$\Oo _Y(a,b) \cong \Oo_{F_1}(ah+(a+b)f),\quad \mbox{ and }\quad
  \Oo_{F_1}(\alpha h+\beta f)\cong \Oo _X(\alpha ,\beta-\alpha),$$ 
 for any $a,b\in\ZZ$ and  for any $\alpha,\beta\in\ZZ$.

%$h^0(\Oo_{\FF}(a,a)) =\binom{a+2}{2}\binom{a+2}{2}-\binom{a+1}{2}\binom{a+1}{2} =a^3+3a^2+3a+1$,
%
%$h^0(\Oo_{\FF}(a+1,a)) - h^0(\Oo_{\FF}(a,a)) = \binom{a+3}{2}\binom{a+2}{2} -\binom{a+2}{2}\binom{a+1}{2}+\binom{a+1}{2}\binom{a+1}{2} -\binom{a+2}{2}\binom{a+2}{2}
%= \binom{a+2}{2}[a^2+5a+6-a^2-3a-2]/2 + \binom{a+1}{2}\binom{a+1}{2} = \binom{a+2}{2}[-a^2+a+4]/2 + \binom{a+1}{2}\binom{a+1}{2} = \frac{1}{4} (a+2)(a+1)(-a^2+a+4)-a^2(a+1)^2/4
%= \frac{1}{4}(a^3+a^2-a^3+a^2+4a-2a^2+2a+8) = (a+1)(3a+4)/2$.
%
%$h^0(\Oo_{\FF}(a+1,a+1)) - h^0(\Oo_{\FF}(a+1,a)) =\binom{a+3}{2}\binom{a+3}{2} -\binom{a+2}{2}\binom{a+2}{2} -\binom{a+3}{2}\binom{a+2}{2} +\binom{a+2}{2}\binom{a+1}{2}
%=\binom{a+3}{2}\binom{a+3}{2}+\binom{a+2}{2}[-\binom{a+2}{2}-\binom{a+3}{2}+\binom{a+1}{2}] =\binom{a+3}{2}\binom{a+3}{2}+\binom{a+2}{2}[-a^2-3a-2+a^2+a-a^2-5a-6]/2 = \binom{a+3}{2}\binom{a+3}{2}+\binom{a+2}{2}[-a^2-7a-8]/2
%= \frac{1}{4}[(a^2+5a+6)^2-(a^2+3a+2)(a^2+7a+8)] =\frac{1}{4}[a^4+25a^2+36+10a^3+12a^2+60a -(a^4+10a^3+31a^2+38a+8)] = \frac{1}{4}(6a^2+22a+28) = (3a^2+11a+14)/2$ (non le uso probabilmente)

\begin{lemma}\label{h0X}
Let $X$ be a surface of bidegree $(1,0)$ and $Y$ be a surface of bidegree $(0,1)$. For any $a\ge 0, b\ge0$, we have
$$h^0(\Oo_X(a,b))=a(b+1)+
\binom{b+2}{2},\quad
h^0(\Oo_Y(a,b))=b(a+1)+
\binom{a+2}{2},$$
and
$$h^1(\Oo_X(a,b))=h^1(\Oo_Y(a,b))=0.$$ 
\end{lemma}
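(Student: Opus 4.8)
The plan is to establish the statement for $X$ first, and then deduce the case of $Y$ by symmetry. The starting point is the description recalled just before \eqref{coeff}: the restriction $\sigma:=\pi_{2|X}\colon X\to\PP^2$ exhibits $X$ as the blow-up of $\PP^2$ at a point $m$, with exceptional curve $E$, and $\sigma^*\Oo_{\PP^2}(1)=\Oo_X(0,1)$. Since $E$ is the unique $(-1)$-curve we have $h=E$, and writing $\ell=\sigma^*(\text{line})$ the relation $h+f=\ell$ (coming from $\Oo_X(0,1)=\Oo_{F_1}(h+f)$) gives $f=\ell-E$. Feeding this into \eqref{coeff} I would translate the dictionary into the single clean formula $\Oo_X(a,b)\cong\sigma^*\Oo_{\PP^2}(a+b)\otimes\Oo_X(-aE)$, which reduces the whole computation to the geometry of powers of a point on $\PP^2$.

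Next I would push this line bundle forward along $\sigma$. For $a\ge0$ one has $\sigma_*\Oo_X(-aE)=\mathcal I_m^{\,a}$ and $R^1\sigma_*\Oo_X(-aE)=0$, the standard behaviour of the blow-up of a smooth point on a surface. By the projection formula and the consequent degeneration of the Leray spectral sequence this yields $H^i(X,\Oo_X(a,b))\cong H^i(\PP^2,\Oo_{\PP^2}(a+b)\otimes\mathcal I_m^{\,a})$ for every $i$. I would then run the long exact sequence attached to
\[
0\to\Oo_{\PP^2}(a+b)\otimes\mathcal I_m^{\,a}\to\Oo_{\PP^2}(a+b)\to\Oo_Z(a+b)\to0,
\]
where $Z$ is the fat point defined by $\mathcal I_m^{\,a}$, of length $\binom{a+1}{2}$. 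Since $H^1(\PP^2,\Oo_{\PP^2}(a+b))=0$ and the evaluation of degree $a+b$ forms on the jets of order $a-1$ at $m$ is surjective as soon as $a+b\ge a-1$ (always true here), I obtain simultaneously $h^1(\Oo_X(a,b))=0$ and $h^0(\Oo_X(a,b))=\binom{a+b+2}{2}-\binom{a+1}{2}$.

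A short binomial simplification rewrites this last quantity as $a(b+1)+\binom{b+2}{2}$, which is the asserted value; the boundary cases $a=0$ and $b=0$ return the expected sanity checks $\binom{b+2}{2}$ and $a+1$. For $Y$ the analogue of \eqref{coeff} reads $\Oo_Y(a,b)\cong\Oo_{F_1}(ah+(a+b)f)$, which is exactly the bundle appearing for $X$ after interchanging $a$ and $b$; hence $h^0(\Oo_Y(a,b))=h^0(\Oo_X(b,a))=b(a+1)+\binom{a+2}{2}$ and $h^1(\Oo_Y(a,b))=0$, completing the argument.

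The main obstacle, and really the only delicate point, is the direct-image step on the blow-up: the vanishing $R^1\sigma_*\Oo_X(-aE)=0$ together with the identification $\sigma_*\Oo_X(-aE)=\mathcal I_m^{\,a}$, and the independence of the conditions imposed by the fat point $Z$ (equivalently, surjectivity of the jet evaluation map). An alternative, fully self-contained route that sidesteps these facts is to push forward instead along the ruling $p\colon F_1\to\PP^1$ with fibre $f$: since $\Oo_{F_1}(\alpha h+\beta f)$ restricts to $\Oo_{\PP^1}(\alpha)$ on each fibre, for $\alpha=b\ge0$ one computes $p_*$ as an explicit direct sum of line bundles on $\PP^1$ and reads off $h^0$ and $h^1$ directly, at the modest cost of fixing the conventions for the $\PP^1$-bundle structure of $F_1$.
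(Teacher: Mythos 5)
Your argument is correct, and it reaches the stated formulas by a genuinely different route from the one in the paper. The paper's proof stays in the ambient threefold: it invokes the vanishing $h^1(\Oo_\FF(a,b))=0$ for $a,b\ge -1$ (as in \cite[Lemma 2.3]{ABBS}) and then reads off both $h^0(\Oo_X(a,b))=h^0(\Oo_\FF(a,b))-h^0(\Oo_\FF(a-1,b))$ and $h^1(\Oo_X(a,b))=0$ from the restriction sequence $0\to\Oo_\FF(a-1,b)\to\Oo_\FF(a,b)\to\Oo_X(a,b)\to 0$ (the $h^1$ vanishing also implicitly uses $h^2(\Oo_\FF(a-1,b))=0$, which follows from the same computation on $\FF$). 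You instead work intrinsically on the surface: the identity $\Oo_X(a,b)\cong\sigma^*\Oo_{\PP^2}(a+b)\otimes\Oo_X(-aE)$ is a correct consequence of \eqref{coeff} together with $h=E$ and $h+f=\sigma^*(\text{line})$, your pushforward facts ($\sigma_*\Oo_X(-aE)=\Ii_{m}^{\,a}$, $R^1\sigma_*\Oo_X(-aE)=0$ for $a\ge 0$) are standard for the blow-up of a smooth surface point, the jet-evaluation map is indeed surjective since $a+b\ge a-1$, and $\binom{a+b+2}{2}-\binom{a+1}{2}=a(b+1)+\binom{b+2}{2}$ checks out; the reduction of $Y$ to $X$ by swapping $a$ and $b$ is also consistent with the paper's dictionary. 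What each approach buys: the paper's proof is a two-line deduction once the cohomology of line bundles on $\FF$ is available from \cite{ABBS}, whereas yours is self-contained at the level of the Hirzebruch surface and makes the geometric meaning of the count transparent (degree $a+b$ plane curves with an $a$-fold point), at the cost of importing the blow-up pushforward machinery; your alternative via the ruling $F_1\to\PP^1$ would work equally well.
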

\begin{proof}
It is easy to prove, as in \cite[Lemma 2.3]{ABBS}, that $h^1(\Oo _\FF(a,b))=0$ if
 $a\ge -1$ and $b\ge -1$.
Hence, by using the exact sequence
$$0 \to \Oo_\FF(a-1,b)\to \Oo_\FF(a,b)\to \Oo_X(a,b)\to 0,$$
we conclude. 
\end{proof}

{\begin{remark}\label{very ample}
Given $X$ (resp.\ $Y$) a surface of bidegree $(1,0)$ (resp.\ $(0,1)$), we know by \cite[Corollary 2.18]{Hartshorne} that the line bundles $\Oo_X(a,b)$ and $\Oo_Y(a,b)$ are very ample, if $a>0,b>0$. 
\end{remark}}

{
We know that, if $L$ is a fiber of the ruling of $X$ (that is in the class of $f$), then $L\in|\Oo_X(1,0)|$ hence we have
$$0 \to \Oo_X(a-1,b)\to \Oo_X(a,b)\to \Oo_L(a,b)\to 0$$
and by the previous lemma we get $\Oo_L(a,b)=\Oo_{\PP^1}(b)$.
Hence a curve $L$ in the class of $f$ has bidegree $(0,1)$.}

{On the other hand a curve $L\in|\Oo_X(0,1)|$ in the class of $h+f$ is such that
$$0 \to \Oo_X(a,b-1)\to \Oo_X(a,b)\to \Oo_L(a,b)\to 0$$
and by the previous lemma we get $\Oo_L(a,b)=\Oo_{\PP^1}(a+b)$,
hence a curve $L$ in the class of $h+f$ has bidegree $(1,1)$.
}

 %, fitting in the exact sequence:
%$$ 
%0 \to \Ii_{X,Y}\to \Oo_{Y}\to \Oo_X\to 0
%$$

%\rosso{MI SEMBRA CHE NON SERVA}
%CONTROLLARE *** questo forse serve nella terza parte
%\begin{remark}\label{c6}
%Let $T\subset \PP^r$ be an integral and non-degenerate variety. Fix an integer $x$ such that $1\le x\le r-\dim X-1$. Let $S\subset T$ be a general subset of $T$ such that $\#S =x$. Then the linear span $\langle S\rangle$ has dimension $x-1$
%and $S$ is the scheme-theoretic intersection of $T$ and $\langle S\rangle$, i.e. $S$ is the scheme-theoretic base locus of the linear system on $T$ induced by $H^0(\PP^r,\Ii_S(1))$. 
%\end{remark}
%\rosso{***}

%PRELIMINARIES:
%*****togliere*********
%In the following we will apply the Horace method, due to Alexander and Hirschowitz \cite{AH}.
%We recall here some notation, for more details see e.g.\ \cite{survey}.
%Given $Z\subset\PP^n$ a projective scheme and $H\subset \PP^n$  a hyperplane, we say that the trace of $Z$  is the scheme-theoretic intersection $Z\cap H$ and the residue $\Res_H(Z)$ of $Z$ with respect to $H$ is
%the scheme defined by the ideal sheaf $\Ii_Z:\Oo_{\PP^n}(-H)$.
%We have the restriction exact sequence:
%$$ 
%0 \to \Ii_{\Res_H(Z)}(-H)\to \Ii_{Z}\to \Ii _{Z\cap H,H}\to 0
%$$
%which gives rise to the so-called Castelnuovo exact sequence of global sections:
%$$ 
%0 \to H^0(\Ii_{\Res_H(Z)}(-H))\to H^0(\Ii_{Z})\to H^0(\Ii _{Z\cap H,H})
%$$

\section{A Miyaoka-type bound for smooth surfaces}\label{bound}

%{***via}
%By~\cite{miyaoka} it is known that the number of smooth rational curves contained in a minimal
%surface $S$ of non-negative Kodaira dimension can be at most 
%$\frac23\left(c_{2}(S)-\frac13c_{1}(S)^{2}\right),$
%where $c_{i}(S)$ denote the Chern classes of the tangent bundle of $S$
%and $c_1^2(S) = \omega _S^2$ is the self-intersection number.
%%However, as the author himself points out, it is far more effective to use the estimates \cite[Formula (6)]{miyaoka}.
%***

In this section we aim to obtain a Miyaoka-type bound \cite{miyaoka} in the case of a smooth surfaces of bidegree $(a,b)$
contained in the flag manifold $\FF$. 
In order to obtain such a result we need to study the Chow ring of the flag and compute Chern classes. 
In particular, applying \cite[eq. (6)]{miyaoka}, we will prove the following result.
\begin{theorem}\label{s1}
Let $S\subset \FF$ be a smooth surface of bidegree $(a,b)$, with $a\ge 3,b\ge3$. Then $S$
contains at most 
\begin{equation}\label{eqab}
\frac{2 (a + b - 2) (3 a^2 b - a^2 + 3 a b^2 - 4 a b + 3 a - b^2 + 3 b)}{(a + b - 1)^2}
\end{equation}
disjoint smooth conics. Moreover, this bound also holds for twistor fibers.
\end{theorem}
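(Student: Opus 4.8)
The plan is to realize each smooth conic as a negative rational curve on $S$, to compute the two Chern numbers $K_S^2$ and $c_2(S)$ from the geometry of $\FF$, and then to feed these into Miyaoka's inequality.

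First I would pin down the numerical type of a conic $C\subset S$. By adjunction for $S\subset\FF$ together with \eqref{omegaF} one has $\omega_S\cong\Oo_S(a-2,b-2)$, so $K_S\cdot C=(a-2)(C\cdot\Oo_\FF(1,0))+(b-2)(C\cdot\Oo_\FF(0,1))$. Since $C$ has bidegree $(1,1)$, Remark~\ref{curved1d2} gives $C\cdot\Oo_\FF(1,0)=C\cdot\Oo_\FF(0,1)=1$, whence $K_S\cdot C=a+b-4$. As $C\cong\PP^1$, adjunction on $S$ yields $C^2=-2-K_S\cdot C=-(a+b-2)$. Writing $N:=a+b-2\ge4$, every conic is a smooth rational $(-N)$-curve, and the conics in the chosen set are disjoint by hypothesis.

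Next I would compute the Chern numbers. Since $\FF\in|\Oo_{\PPPP}(1,1)|$, from $c(T_{\PPPP})=(1+h_1)^3(1+h_2)^3$ (with $h_1,h_2$ the hyperplane classes of the two factors) and the normal bundle sequence $0\to T_\FF\to T_{\PPPP}|_\FF\to\Oo_\FF(1,1)\to 0$ one gets $c(T_\FF)=(1+u)^3(1+v)^3/(1+u+v)$, where $u=\Oo_\FF(1,0)$, $v=\Oo_\FF(0,1)$ satisfy $u^3=v^3=0$, $u^2v=uv^2=1$. Expanding gives $c_1(T_\FF)=2u+2v$ and $c_2(T_\FF)=u^2+5uv+v^2$. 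With $n:=\Oo_\FF(a,b)=[S]$ and normal bundle $\Oo_S(a,b)$, the Whitney formula gives $c_1(S)=-K_S$ and $c_2(S)=\big(c_2(T_\FF)-c_1(T_\FF)\,n+n^2\big)\cdot n$; intersecting in the Chow ring of $\FF$ produces explicit polynomials. A direct calculation yields $K_S^2=3a^2b+3ab^2-4a^2-4b^2-16ab+12a+12b$ and $3c_2(S)-K_S^2=2(3a^2b+3ab^2-a^2-b^2-4ab+3a+3b)$. Note that for $a,b\ge3$ the bundle $\omega_S=\Oo_S(a-2,b-2)$ is ample, so $S$ is a minimal surface of general type with $K_S$ nef.

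Finally I would invoke Miyaoka's inequality. Contracting the $k$ disjoint $(-N)$-curves produces a normal surface $S'$ with $k$ cyclic quotient singularities of type $\tfrac{1}{N}(1,N-1)$ and with $K_{S'}$ nef; applying \cite[eq.~(6)]{miyaoka} (the orbifold Bogomolov--Miyaoka--Yau bound $K_{S'}^2\le 3\,e_{\mathrm{orb}}(S')$) and inserting the local contributions $K_{S'}^2=K_S^2+k\,(N-2)^2/N$ and $e_{\mathrm{orb}}(S')=c_2(S)-k(2-1/N)$ gives, after the identity $(N-2)^2+6N-3=(N+1)^2$, the clean inequality $k\,(N+1)^2/N\le 3c_2(S)-K_S^2$. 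Hence $k\le N\big(3c_2(S)-K_S^2\big)/(N+1)^2$, and substituting $N=a+b-2$ together with the polynomials above reproduces \eqref{eqab}. The statement for twistor fibers is then immediate, since distinct twistor fibers are disjoint smooth conics, hence subject to the same count. The main obstacle is the correct application of Miyaoka's formula: one must verify its hypotheses (nefness and minimality, and that $N\ge3$ so the curves contract to genuine quotient singularities) and compute the local orbifold contributions accurately, after which the Chern-number bookkeeping, though routine, must be carried out without error.
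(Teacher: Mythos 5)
Your proposal is correct and follows essentially the same route as the paper: both establish that $S$ is a minimal surface of general type, compute $c_1^2(S)$ and $c_2(S)$ via the normal bundle sequences for $\FF\subset\PPPP$ and $S\subset\FF$ (your values agree with Remark~\ref{c1S} and Proposition~\ref{c2S}), and apply Miyaoka's inequality \cite[eq.~(6)]{miyaoka} to the $k$ disjoint rational curves of self-intersection $-(a+b-2)$. The only discrepancies are cosmetic: a single $(-N)$-curve contracts to a cyclic quotient singularity of type $\tfrac{1}{N}(1,1)$ rather than $\tfrac{1}{N}(1,N-1)$ (the local contributions you insert are the correct ones for $\tfrac{1}{N}(1,1)$, so the final bound is unaffected), and you derive minimality from ampleness of $\omega_S$ where the paper instead checks directly via adjunction that no smooth rational curve on $S$ can have self-intersection $-1$.
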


%Clearly, we need to study Chern classes of such a surfaces.
%Therefore, before stating our results, 
We recall some known facts about Chern classes, as a general reference see~\cite{Fulton}.
%Passing to Chern classes, 
For any exact sequence of vector bundles on any algebraic scheme $Y$:
$$0 \to E'\to E\to E''\to 0$$
we have $c_1(E) =c_1(E')+c_1(E'')$ and $c_2(E) = c_2(E')+c_2(E'') +c_1(E')\cdot c_1(E'')$, where
$c_i(F)$ are the Chern classes of the vector bundle $F$ and the sum and the product is the sum and the product in the Chow
ring of $Y$. % (or if you prefer in the cohomology ring $H^\ast(Y,\CC)$)

Now considering the product $\PP^2\times\PP^2$, we have
$$c_1(\Oo _{\mathbb{P}^{2}\times \mathbb{P}^{2}}(1,0))=L\times \PP^2, \qquad\quad
c_1(\Oo _{\mathbb{P}^{2}\times \mathbb{P}^{2}}(0,1))=\PP^2\times R,$$
where $L$ is a line in the first $\PP^{2}$ and $R$ a line in the second one.

Recall that 
$c_1(T\PP^2)=3\ell$
and $c_2(T\PP^2)=3p$ where $\ell$ the class of a line and $p$ is the class of a point in the Chow ring of  $\PP^2$, see 
e.g.\ \cite[Example 3.2.11]{Fulton}.
Since $T(\PP^2\times\PP^2)\cong \Pi_1^*(T\PP^2)\oplus \Pi_2^*(T\PP^2)$, we have
$$c_1(T(\PP^2\times \PP^2))=3(L\times \PP^2)+3(\PP^2\times R)=c_1(\Oo_{\PP^2\times\PP^2}(3,3))$$
and 
\begin{equation}\label{eqc2*}
c_2(T(\PP^2\times \PP^2))=3(\{o\}\times \PP^2)+3(\PP^2\times \{o'\})+9 (L\times R),
\end{equation}
where $o$ and $o'$ are points in the two planes.

Since the tangent bundle of a variety is the dual of the cotangent bundle,
we have that
$$c_1(T\FF)\cong-c_1(\omega_\FF)\cong-c_1(\Oo _{\FF}(-2,-2))\cong c_1(\Oo _{\FF}(2,2)).$$
In the following remark we describe the second Chern class.

\begin{remark}\label{classi flag}
%Since $\omega_\FF\cong\Oo _{\FF}(-2,-2)$, we have
%$$c_1(T\FF) =c_1(\Oo _{\FF}(2,2)).$$ 
%\footnote{\rosso{questo calcolo 
%$$c_1(T\FF) =c_{1}(T(\mathbb{P}^{2}\times
%\mathbb{P}^{2})_{|\FF})-c_{1}(\Oo _{\FF}(1,1))=c_{1}(\Oo _{\FF}(3,3))-c_{1}(\Oo _{\FF}(1,1))
% =c_1(\Oo _{\FF}(2,2))$$ 
% non serve perche' il tangente e' il duale del cotangente, quindi la $c_1$ la sappiamo gia'!*** :-D ***** infatti
%$$c_1(T)=-c_1(\Omega)=-c_1(\omega)$$}}
%Since
%$\FF\in |\Oo _{\mathbb{P}^{2}\times
%\mathbb{P}^{2}}(1,1)|$, then
From
 the normal bundle sequence
$$0\to T\FF \to T(\mathbb{P}^{2}\times\mathbb{P}^{2})_{|\FF}\to \Oo _{\FF}(1,1)\to 0$$ 
we have that
$$c_1(T\FF) =c_{1}(T(\mathbb{P}^{2}\times
\mathbb{P}^{2})_{|\FF})-c_{1}(\Oo _{\FF}(1,1))=c_{1}(\Oo _{\FF}(3,3))-c_{1}(\Oo _{\FF}(1,1))
 =c_1(\Oo _{\FF}(2,2))$$ 
and that
 $$c_2(T\FF) =c_2(T(\mathbb{P}^{2}\times\mathbb{P}^{2})_{|\FF}) -c_1(\Oo _{\FF}(2,2))\cdot c_1(\Oo _{\FF}(1,1)).$$
Now from Formula~\eqref{eqc2*} 
we get that $$c_2(T(\mathbb{P}^{2}\times
\mathbb{P}^{2})_{|\FF})=3(\{o\}\times R) + 3(L\times \{o'\})+9C,$$ where $C\subset \FF$
is a smooth conic.  
%\rosso{spiegare meglio la restrizione, guardando la proof}
Indeed, the class
represented by $L\times R$ is embedded in $\mathbb{P}^{8}$ by the Segre embedding 
as a smooth quadric surface spanning a $3$-dimensional linear space. Remember that $\FF$ is a hyperplane
section of the Segre variety $\Sigma$, hence the cycle $L\times R$ restricted to the flag is a smooth conic. 
%Therefore we have
% $$c_2(T\FF) =c_2(T(\mathbb{P}^{2}\times\mathbb{P}^{2})_{|\FF}) -c_1(\Oo _{\FF}(2,2))\cdot c_1(\Oo _{\FF}(1,1)).$$
%Thus $c_2(T\FF)$ is represented by
%$3\{o\}\times R + 3L\times \{o'\}+9C -c_1(\Oo _{\FF}(2,2))\cdot c_1(\Oo _{\FF}(1,1))$.
\end{remark}

From \cite[Proposition 3.11]{ABBS} we immediately get the following.
\begin{lemma}\label{prop-part-a}
Let $S\subset\FF$ be a smooth surface of bidegree $(a,b)$. Then we have
\begin{enumerate}
\item $c_1(\Oo _S(1,0))\cdot c_1(\Oo _S(1,0)) =b$;
\item $c_1(\Oo _S(0,1))\cdot c_1(\Oo _S(0,1)) =a$;
\item $c_1(\Oo _S(1,0))\cdot c_1(\Oo _S(0,1)) =c_1(\Oo _S(1,0))\cdot c_1(\Oo _S(0,1)) =a+b$.
\end{enumerate}
\end{lemma}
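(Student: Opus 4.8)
The plan is to compute each of the three intersection numbers by restricting the divisor classes from $\FF$ to the surface $S$ and then using the projection formula together with the triple-intersection rules in the Chow ring of $\FF$ recorded earlier (from \cite[Proposition 3.11]{ABBS}). Since $S\in|\Oo_\FF(a,b)|$, its class as a divisor in $\FF$ equals $c_1(\Oo_\FF(a,b))=a\,c_1(\Oo_\FF(1,0))+b\,c_1(\Oo_\FF(0,1))$. For a line bundle $\Ll$ on $\FF$ one has the restriction identity $c_1(\Oo_S(\,\cdot\,))=c_1(\Ll)_{|S}$, and hence an intersection product of two such restricted first Chern classes on $S$ can be pushed forward to a triple product in $\FF$ against the class of $S$ itself. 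Concretely, for line bundles $\Ll_1,\Ll_2$ on $\FF$,
\begin{equation*}
c_1(\Ll_1)_{|S}\cdot c_1(\Ll_2)_{|S}=c_1(\Ll_1)\cdot c_1(\Ll_2)\cdot[S]=c_1(\Ll_1)\cdot c_1(\Ll_2)\cdot c_1(\Oo_\FF(a,b))
\end{equation*}
computed in the Chow ring of $\FF$. This reduces everything to evaluating triple products of $\Oo_\FF(1,0)$ and $\Oo_\FF(0,1)$.

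For item (1) I would take $\Ll_1=\Ll_2=\Oo_\FF(1,0)$, so that the quantity to compute is $\Oo_\FF(1,0)\cdot\Oo_\FF(1,0)\cdot\big(a\,\Oo_\FF(1,0)+b\,\Oo_\FF(0,1)\big)$. Expanding by linearity, the first term $a\cdot\Oo_\FF(1,0)^{3}$ vanishes by the first listed rule, while the second term $b\cdot\Oo_\FF(1,0)\cdot\Oo_\FF(1,0)\cdot\Oo_\FF(0,1)$ equals $b\cdot 1=b$ by the second rule. This gives (1). Symmetrically, for item (2) with $\Ll_1=\Ll_2=\Oo_\FF(0,1)$, the term $b\cdot\Oo_\FF(0,1)^{3}$ vanishes by the fourth rule and $a\cdot\Oo_\FF(0,1)\cdot\Oo_\FF(0,1)\cdot\Oo_\FF(1,0)=a\cdot 1=a$ by the third rule, yielding (2). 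For item (3), taking $\Ll_1=\Oo_\FF(1,0)$ and $\Ll_2=\Oo_\FF(0,1)$, the product to evaluate is $\Oo_\FF(1,0)\cdot\Oo_\FF(0,1)\cdot\big(a\,\Oo_\FF(1,0)+b\,\Oo_\FF(0,1)\big)$; the $a$-term is $a\cdot\Oo_\FF(1,0)\cdot\Oo_\FF(0,1)\cdot\Oo_\FF(1,0)=a$ and the $b$-term is $b\cdot\Oo_\FF(0,1)\cdot\Oo_\FF(1,0)\cdot\Oo_\FF(0,1)=b$, summing to $a+b$.

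I do not expect any serious obstacle here: the statement is essentially a bookkeeping exercise, and the only things needed beyond linearity of intersection products are the four triple-product rules already quoted and the fact that $[S]=c_1(\Oo_\FF(a,b))$. The one point deserving a word of care is the justification that an intersection of restricted classes on $S$ equals the corresponding triple product in $\FF$ weighted by $[S]$; this is the projection formula applied to the closed embedding $S\hookrightarrow\FF$, valid because $S$ is smooth (hence the self-intersection formula and functoriality of Chern classes apply cleanly). The duplicated expression in the displayed statement of (3) is a harmless typo — both copies denote the same number $a+b$ — so no genuine ambiguity arises.
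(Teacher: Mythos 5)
Your proof is correct and follows the same route the paper intends: the paper gives no written argument beyond ``From \cite[Proposition 3.11]{ABBS} we immediately get the following,'' and your computation — restricting to $S$, identifying $[S]$ with $a\,c_1(\Oo_\FF(1,0))+b\,c_1(\Oo_\FF(0,1))$, and expanding the triple products via the four multiplication rules — is exactly the intended bookkeeping. All three numbers check out, and your remark that the duplicated expression in item (3) is a typo is accurate.
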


\begin{remark} \label{c1S}
If $S\in |\Oo _{\FF}(a,b)|$, from Formula~\eqref{omegaF}, the adjunction formula gives $\omega _S\cong \Oo _S(a-2,b-2)$, which implies
\begin{equation}\label{c1tan}
c_1(TS) = c_1(\Oo _S(2-a,2-b)).\end{equation}
Hence, from the previous lemma it is possible to derive the formula for $c_{1}^{2}(S)$, already given in~\cite[Remark 3.12]{ABBS}:
$$
c_{1}^{2}(S)=3a^{2}b+3ab^{2}-4a^{2}-4b^{2}-16ab+12a+12b.
$$

\end{remark}

We now pass to compute the second Chern class of a surface $S$.

\begin{proposition} \label{c2S}
Let $S\subset\FF$ be a smooth surface of bidegree $(a,b)$. Then we have
$$c_2(S) = 6a+6b +3 a^2 b - 2 a^2 + 3 a b^2 - 8 a b - 2 b^2.$$
\end{proposition}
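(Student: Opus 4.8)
The plan is to compute $c_2(S)$ using the normal bundle sequence for $S$ inside $\FF$, exactly as was done for $\FF$ inside $\PPPP$ in Remark~\ref{classi flag}. Since $S\in|\Oo_\FF(a,b)|$ is a smooth divisor, its normal bundle is $\Oo_S(a,b)$, and we have the exact sequence
$$0\to TS\to (T\FF)_{|S}\to \Oo_S(a,b)\to 0.$$
By the Whitney formula recalled above, this gives
$$c_2(S)=c_2((T\FF)_{|S})-c_1(TS)\cdot c_1(\Oo_S(a,b)).$$
The first step is therefore to restrict the class $c_2(T\FF)$ computed in Remark~\ref{classi flag} to $S$, and the second is to plug in $c_1(TS)=c_1(\Oo_S(2-a,2-b))$ from~\eqref{c1tan}.

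The main computational ingredient is to express everything in terms of the intersection numbers from Lemma~\ref{prop-part-a}. Writing $x:=c_1(\Oo_S(1,0))$ and $y:=c_1(\Oo_S(0,1))$, that lemma gives $x^2=b$, $y^2=a$, and $xy=a+b$ on $S$. The term $c_1(TS)\cdot c_1(\Oo_S(a,b))$ then expands as $(2-a)x+(2-b)y)\cdot(ax+by)$, which is a straightforward quadratic form in $x,y$ that I would evaluate using these three numbers. The slightly more delicate step is the restriction $c_2((T\FF)_{|S})$: from Remark~\ref{classi flag} we have $c_2(T\FF)=3(\{o\}\times R)+3(L\times\{o'\})+9C$ as a class in the Chow ring of $\FF$, and I must intersect each of these one-cycles with $S\in|\Oo_\FF(a,b)|$. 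Concretely, $\{o\}\times R$ and $L\times\{o'\}$ are fibers of bidegree $(0,1)$ and $(1,0)$ respectively, and $C$ is a smooth conic of bidegree $(1,1)$; by Remark~\ref{curved1d2} these meet $S$ in $S\cdot\Oo_\FF(0,1)=\ldots$, etc., so each degree is read off by pairing the bidegree of the curve against $(a,b)$.

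Carrying this out, a curve of bidegree $(d_1,d_2)$ meets $S$ in $a d_2 + b d_1$ points (using that intersecting with $\Oo_\FF(a,b)$ pairs against the curve's bidegree via Remark~\ref{curved1d2}). Thus the two fibers contribute $3a$ and $3b$, while the conic $C$ of bidegree $(1,1)$ contributes $9(a+b)$, giving $c_2((T\FF)_{|S})=3a+3b+9(a+b)=12a+12b$. Subtracting the quadratic term $c_1(\Oo_S(2-a,2-b))\cdot c_1(\Oo_S(a,b))$ evaluated via $x^2=b,\ y^2=a,\ xy=a+b$ should then collapse to the stated polynomial $6a+6b+3a^2b-2a^2+3ab^2-8ab-2b^2$.

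I expect the only real obstacle to be bookkeeping: making sure the restriction of each summand of $c_2(T\FF)$ to $S$ uses the correct pairing of bidegrees, and that the quadratic expansion of $c_1(TS)\cdot c_1(\Oo_S(a,b))$ is combined with the right signs. There is no conceptual difficulty once Remark~\ref{classi flag}, Lemma~\ref{prop-part-a}, and~\eqref{c1tan} are in hand; the result is a routine, if slightly lengthy, computation in the Chow ring of $S$. As a consistency check I would verify that the answer is symmetric under $a\leftrightarrow b$, which it visibly is, matching the $a\leftrightarrow b$ symmetry of the geometry.
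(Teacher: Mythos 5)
Your overall strategy is the same as the paper's (normal bundle sequence of $S$ in $\FF$, Whitney's formula, adjunction for $c_1(TS)$, and the intersection numbers of Lemma~\ref{prop-part-a}), and your treatment of the term $c_1(TS)\cdot c_1(\Oo_S(a,b))$ is correct: it contributes $3a^2b+3ab^2-8ab-2a^2-2b^2$ exactly as in the paper. However, there is a genuine error in your evaluation of $c_2\bigl((T\FF)_{|S}\bigr)$. You quote Remark~\ref{classi flag} as saying $c_2(T\FF)=3(\{o\}\times R)+3(L\times\{o'\})+9C$, but that class is $c_2\bigl(T(\PPPP)_{|\FF}\bigr)$, i.e.\ the restriction to $\FF$ of the class in~\eqref{eqc2*}. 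The remark then uses the normal bundle sequence of $\FF$ in $\PPPP$ to get $c_2(T\FF)=c_2\bigl(T(\PPPP)_{|\FF}\bigr)-c_1(\Oo_\FF(2,2))\cdot c_1(\Oo_\FF(1,1))$, and you have dropped this correction term. Restricted to $S$ it equals $c_1(\Oo_S(2,2))\cdot c_1(\Oo_S(1,1))=2b+4(a+b)+2a=6(a+b)$ by Lemma~\ref{prop-part-a}, so the correct value is $c_2\bigl((T\FF)_{|S}\bigr)=3a+3b+9(a+b)-6(a+b)=6a+6b$, not the $12a+12b$ you obtain. Your computation therefore does \emph{not} collapse to the stated polynomial; it overshoots by $6a+6b$.

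Note also that your proposed consistency check (symmetry under $a\leftrightarrow b$) cannot detect this error, since $12a+12b$ is also symmetric. A sharper check is $a=b=1$: a smooth $(1,1)$ surface in $\FF$ is a degree-$6$ del Pezzo surface ($\PP^2$ blown up at three points), whose topological Euler number is $6$; the stated formula gives $6+6+3-2+3-8-2=6$, whereas your version would give $18$. Once the missing term $-c_1(\Oo_\FF(2,2))\cdot c_1(\Oo_\FF(1,1))$ is reinstated, the rest of your argument goes through and coincides with the paper's proof.
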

\begin{proof}
From the normal bundle sequence of $S$ in $\FF$
$$0 \to TS \to T\FF_{|S} \to
\Oo_S(a,b)\to 0,$$
we have, by using \eqref{c1tan}, $$c_2(TS) = c_2(T\FF_{|S})-c_1(\Oo _S(a-2,b-2))\cdot c_1(\Oo _S(a,b)).$$
By means of Lemma~\ref{prop-part-a}, we compute
%\begin{align*}
%c_1(\Oo _S(a-2,b-2))\cdot c_1(\Oo _S(a,b))=
%%&(a-2)ac_{1}(\Oo_{S}(1,0)\cdot\Oo_{S}(1,0))\\
%%&+(a-2)bc_{1}(\Oo_{S}(1,0)\cdot\Oo_{S}(0,1))\\
%%&+(b-2)ac_{1}(\Oo_{S}(0,1)\cdot\Oo_{S}(1,0))\\
%%&+(b-2)bc_{1}(\Oo_{S}(0,1)\cdot\Oo_{S}(0,1))\\
%=&(a-2)ab+(a-2)b(a+b)\\
%&+(b-2)a(a+b)+(b-2)ba\\
%=&3 a^2 b - 2 a^2 + 3 a b^2 - 8 a b - 2 b^2.
%\end{align*}
\begin{align*}
c_1(\Oo _S(a-2,b-2))\cdot c_1(\Oo _S(a,b))
=&(a-2)ab+(a-2)b(a+b)\\
&+(b-2)a(a+b)+(b-2)ba\\
=&3 a^2 b - 2 a^2 + 3 a b^2 - 8 a b - 2 b^2.
\end{align*}

Now we compute $c_2(T\FF_{|S})$, using Remark \ref{classi flag}.
Thanks to Lemma~\ref{prop-part-a} we have that $c_1(\Oo _S(1,1))\cdot c_1(\Oo _S(2,2)) = 2a+2b+4(a+b) =6(b_1+b_2)$.
Hence we get $$c_2(T\FF_{|S})=3b+3a +9(a+b)-6(a+b) =6a+6b.$$
Therefore, we obtain the formula in the statement.
%conclude $c_2(S) = 6a+6b +3 a^2 b - 2 a^2 + 3 a b^2 - 8 a b - 2 b^2$.
\end{proof}

We are now ready to prove the main result of this section.

\begin{proof}[Proof of Theorem~\ref{s1}]
Recall that $\omega _S \cong \Oo _S(a-2,b-2)$. Since $a\ge3$ and $b\ge3$, then $S$ is of general type. 

We prove now that $S$ is minimal.
Let $C\subset S$ be any smooth rational curve of bidegree $(d_1,d_2)\in \NN^2\setminus\{(0,0)\}$. The adjunction formula gives $\omega_C \cong \omega_S(C)_{|C}$, that is, with another notation, 
$K_C = C\cdot (C+K_S)$. 
Hence, since $C$ is rational we have that $\omega_C=\Oo_C(-2)$
and by using Remark~\ref{curved1d2}, we 
compute $$-2=C^2 +d_1(a-2) +d_2(b-2).$$  Thus $C^2 \ne -1$. Hence $S$ is a minimal surface of general type and we
are in the hypothesis of~\cite[Proposition 2.1.1]{miyaoka}.

Assume that $S$ contains $k$ pairwise disjoint smooth conics. 
Since a smooth conic has bidegree $(1,1)$, then it has self-intersection $2-a-b<0$.
Then, by applying \cite[eq. (6)]{miyaoka} we get
\begin{equation*}\label{eqs1}
k\frac{(a+b-1)^2}{3(a+b-2)} \le c_2(S) -\frac{1}{3}c_1^2(S),
\end{equation*}
and hence
\begin{equation*}\label{eqs2}
k \le \frac{3(a+b-2)}{(a+b-1)^2}\left(c_2(S) -\frac{1}{3}c_1^2(S)\right).
\end{equation*}

As we have already computed $c_{1}^{2}(S)$ in Remark \ref{c1S} and $c_{2}(S)$ in Proposition \ref{c2S},
we have that
\begin{align*}
k\le &\frac{3(a+b-2)}{(a+b-1)^2}\left(c_2(S) -\frac{1}{3}c_1^2(S)\right)\\
=&\frac{3(a+b-2)}{(a+b-1)^2}\left(6a+6b +3 a^2 b - 2 a^2 + 3 a b^2 - 8 a b - 2 b^2\right.\\
&\left. -\frac{1}{3}(3a^{2}b+3ab^{2}-4a^{2}-4b^{2}-16ab+12a+12b)\right)\\
=&\frac{2 (a + b - 2) (3 a^2 b - a^2 + 3 a b^2 - 4 a b + 3 a - b^2 + 3 b)}{(a + b - 1)^2}.
\end{align*}

Finally, as two different twistor fibers are disjoint, and $\Tt\subset \Cc$
%(and have bidegree $(1,1)$), 
the same inequality holds also for twistor fibers.
\end{proof}

\begin{remark}
Notice that when $a=b$, Formula~\eqref{eqab} becomes
$$
k\le\frac{24( a^2 -  a +  1) ( a - 1) a}{(2 a - 1)^2}.
$$
This quantity is clearly bigger than $3a^{2}(=a^{2}+ab+b^{2})$ obtained in~\cite[Proposition 8.1]{ABBS} {as a bound for twistor lines contained in} smooth  surfaces {which are not $j$-invariant}. Notice that in Theorem \ref{s1} we are only assuming that the surface is smooth. 

\begin{figure}[h]
\vspace{-5pt}
\includegraphics[scale=0.7]{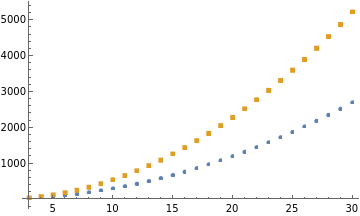}
\vspace{-5pt}
\caption{In blue round markers the function $3a^{2}$, while in yellow square markers $\frac{24( a^2 -  a +  1) ( a - 1) a}{(2 a - 1)^2}$, for $3\le a\le 30$.}
\end{figure}

Moreover, in Proposition~\ref{a0} below, we will prove that if an integral  surface containing $x\ge a^{2}+ab+b^{2}$ smooth conics exists, then it is unique.
\end{remark}

\begin{remark}\label{3.7}
Along the same lines of the proof of Theorem~\ref{s1}, it is possible to obtain an analogous upper bound
on the number of curves of bidegree $(1,0)$. In particular, if $S$ is a surface of bidegree $(a,b)$, with $a\ge 3, b\ge 3$, and $C\subset S$ is a curve of bidegree $(1,0)$, then $C^{2}=-a$. Hence, the maximum number $k$ of $(1,0)$ curves contained in $S$ is equal to
\begin{align*}
k \le& \frac{3a}{(a+1)^2}\left(c_2(S) -\frac{1}{3}c_1^2(S)\right)\\
=&\frac{3a}{(a+1)^2}\left(6a+6b +3 a^2 b - 2 a^2 + 3 a b^2 - 8 a b - 2 b^2\right.\\
&\left. -\frac{1}{3}(3a^{2}b+3ab^{2}-4a^{2}-4b^{2}-16ab+12a+12b)\right)\\
=&\frac{2 a ( a^2 ( 3 b-1) + a (3 b^2-4b+3)-(b-3) b )}{(1 + a)^2}.
\end{align*}

\end{remark}

\section{Surfaces with prescribed number of conics}\label{prescribed}
%CONTROLLARE: secondo me non serve, se poi il referee lo chiede si aggiunge
%
%
%\begin{remark}\label{c4}
%For any $p\in \FF$ there is a unique curve $L\subset \FF$ of bidegree $(0,1)$, a unique curve  $R\subset \FF$ of bidegree $(1,0)$ and $\infty^2$ smooth and connected curves $C\subset \FF$ of bidegree $(1,1)$ such that $p\in L\cap R\cap C$.
%A general such $C$ is neither contained in $W$ (surface di tipo $(1,0)$) nor in $Y$ (surface di tipo $(0,1)$). The same is true for $L$ and $R$, unless $p$ is contained in the unique curve with negative self-intersection of $W$ or $Y$.
%Thus if $E$ is general in $\Cc(e,f,g)$ (or $\Tt(e,f,g)$) the scheme $E\cap Y$ (resp. $E\cap W$) is formed by a general subset (di punti) of $Y$ (resp. $W$) with cardinality $e+f$ (resp. $e+g$).
%\end{remark}

In this section we prove two results. First we show that, if there exists a surface in $\FF$ containing a certain amount of smooth conics, then this
surface is unique. Later, we will prove that, under certain numerical hypotheses, a general surface containing a general set of disjoint smooth
conics, does not contains any other conic disjoint from the previous one and it is irreducible and smooth.

In~\cite[Proposition 8.1]{ABBS} we proved that an integral surface $S$ of bidegree $(a,b)$ that is not $j$-invariant,
is such that $S\cap j(S)$ contains at most $a^{2}+ab+b^{2}$ smooth conics. We now prove that this number 
is critical in order to find at most one integral surface containing that particular number of pairwise disjoint smooth conics.

\begin{proposition}\label{a0}
Fix positive integers $a,b$. Let $T\subset \FF$ be any union of 
$$x\ge a^2+ab+b^2$$
pairwise disjoints smooth conics. Then there exists at most a unique integral surface of bidegree $(a,b)$ which contains $T$.
% $T$ is contained in at most one integral element of $|\Oo_{\FF}(a,b)|$.
\end{proposition}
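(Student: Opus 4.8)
The plan is to argue by contradiction, combining a Bézout-type degree estimate with a connectedness theorem. Suppose there were two \emph{distinct} integral surfaces $S_1,S_2\in|\Oo_\FF(a,b)|$, both containing $T$. Since $S_1$ and $S_2$ are integral and distinct, they share no surface component, so $Z:=S_1\cap S_2$ is a curve whose cycle class equals $\Oo_\FF(a,b)\cdot\Oo_\FF(a,b)$. The first step is to compute the degree of $Z$ with respect to the (very) ample class $\Oo_\FF(1,1)$ coming from the Segre embedding. Expanding $(\Oo_\FF(a,b))^2=a^2\,\Oo_\FF(1,0)^2+2ab\,\Oo_\FF(1,0)\Oo_\FF(0,1)+b^2\,\Oo_\FF(0,1)^2$ and using the multiplication rules in the Chow ring recalled above, I would obtain
\[
\deg_{\Oo_\FF(1,1)} Z=\Oo_\FF(a,b)\cdot\Oo_\FF(a,b)\cdot\Oo_\FF(1,1)=a^2+4ab+b^2 .
\]

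Next I would compare this with the contribution of $T$. Each smooth conic has bidegree $(1,1)$, hence $\Oo_\FF(1,1)$-degree $2$; since the $x$ conics of $T$ are disjoint and each lies on both $S_1$ and $S_2$, the reduced cycle $[T]$ is an effective subcycle of $Z$ (each conic occurring with intersection multiplicity $\ge 1$). Writing $Z=[T]+R$ with $R\ge 0$ effective, this gives $a^2+4ab+b^2=2x+\deg_{\Oo_\FF(1,1)}R$. The crucial extra input is that $S_1,S_2$ are ample divisors (for $a,b\ge 1$ the nef bundles $\Oo_\FF(1,0),\Oo_\FF(0,1)$ plus the ample $\Oo_\FF(1,1)$ make $\Oo_\FF(a,b)$ ample), so by the connectedness of the intersection of two ample divisors on the smooth projective threefold $\FF$, the curve $Z$ is \emph{connected}. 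On the other hand $T$ is a disjoint union of $x\ge a^2+ab+b^2\ge 3$ conics, hence disconnected; therefore the residual $R$ cannot be empty, and being effective and nonzero it satisfies $\deg_{\Oo_\FF(1,1)}R\ge 1$.

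Putting these together yields $a^2+4ab+b^2\ge 2x+1\ge 2(a^2+ab+b^2)+1$, which simplifies to $(a-b)^2+1\le 0$, a contradiction valid for all positive $a,b$. This forces $S_1=S_2$, proving uniqueness. I would present the degree computation and the residual bookkeeping as routine, and flag the \textbf{main obstacle} as the justification that $Z=S_1\cap S_2$ is connected: purely numerically the degree estimate already rules out two surfaces when $a\ne b$ (where $a^2+ab+b^2>(a^2+4ab+b^2)/2$), but at the boundary case $a=b$, $x=3a^2$ the degree count is consistent with $Z=T$, and it is precisely the connectedness of the intersection of ample divisors that excludes a curve $Z$ equal to a disjoint union of conics. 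I would therefore take care to cite a connectedness statement applicable in the smooth ambient $\FF$, so as not to require smoothness of $S_1$ or $S_2$ themselves.
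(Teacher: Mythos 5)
Your proposal is correct and is essentially the paper's own argument: a Bézout-type count on the $1$-cycle $S_1\cap S_2$ (your Segre-degree bookkeeping $a^2+4ab+b^2=2x+\deg R$ is in fact more precise than the degree stated in the paper) combined with connectedness of that intersection, contradicting the disconnectedness of $T$. The one packaging difference concerns the step you flagged as the main obstacle: the paper obtains connectedness without any ambient theorem by viewing $S\cap S'$ as an ample effective Cartier divisor on the \emph{integral} projective surface $S$ (integrality is part of the hypothesis), which is the cleanest justification and avoids having to cite a connectedness statement for intersections of ample divisors in $\FF$.
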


\begin{proof}
Assume by contradiction the existence of two integral $S, S'\in |\Oo_{\FF}(a,b)|$ such that $S\ne S'$. Since $S$ is integral, the intersection $S\cap S'$ is scheme-theoretically an effective Cartier divisor of $S$
whose degree %with respect to $\Oo_{\FF}(1,1)$
is $a^2+ab+b^2$. Thus $T =S\cap S'$. Since $S\cap S'$ is an ample Cartier divisor of an integral variety of positive dimension, $S\cap S'$ is connected. But $T$
has $a^2+ab+b^2>1$ connected components.
\end{proof}

\begin{remark}
The connectedness of $S\cap S'$ was used also in~\cite[Corollary]{ABBS},
for $S'=j(S)$ in order to prove that an integral surface of bidegree $(a,b)$
contains at most $a^{2}+ab+b^{2}-1$ twistor fibers.
\end{remark}

We introduce now some notation which we will use in the next proofs.
For all $(e,f,g)\in \NN^3\setminus \{(0,0,0)\}$ let $\Cc(e,f,g)$ denote the set of all $(e+f+g)$-ples $(C_1,\dots ,C_e,F_1,\dots ,F_f,G_1,\dots,G_g)$ such that each $C_i$ is a smooth conic, each $F_i$ is a curve of bidegree $(1,0)$, each $G_i$ is a curve of bidegree $(0,1)$
and the union $C_1\cup \cdots \cup C_e\cup F_1\cup \cdots \cup F_f\cup G_1\cup \cdots \cup G_g$ has $e+f+g$ connected disjoint components.
Set $\Cc(e):= \Cc(e,0,0)$.

%*** forse non serve***
%\rosso{Since all the components of $E\in \Cc(e,f,g)$ are disjoint rational curves (by \cite[Remark 3.2]{ABBS},}
%we have $$h^0(\Oo_E(a,b)) =e(a+b+1)+f(a+1)+g(b+1).$$ 
%*****

We will denote by $\Ii_{X,Y}$ the sheaf ideal of a scheme
 $X$ contained in a projective variety $Y$.

\begin{remark}\label{facile}
Let $a,b\ge0$ and $T,C\subset \FF$ be two curves such that $C$ is a smooth conic and  $C\cap T=\emptyset$.
Then from the exact sequence 
\begin{equation} \label{ex-seq}
0 \to \Ii_{C\cup T,\FF}(a,b)\to \Ii_{T,\FF}(a,b)\to \Oo_C(a+b)\to 0,
\end{equation}
since $C$ is rational, 
we get $h^1(\Ii_{T,\FF}(a,b))\le h^1(\Ii_{C\cup T,\FF}(a,b))$
\end{remark}

\begin{theorem}\label{c1-prima parte}
Fix integers $b\ge a\ge1$, $0\le  x\le a(a-1)/2$. Let $T\subset \FF$ be a general union of $x$ smooth conics.
Then
$h^1(\Ii _{T,\FF}(a,b)) =0$.
\end{theorem}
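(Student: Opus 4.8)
The plan is to prove the vanishing by the Horace method \cite{HH}, after two reductions and phrased as an induction on a \emph{stronger} statement for the mixed configurations $\Cc(e,f,g)$ introduced above. First I would reduce to the extremal value $x=a(a-1)/2$: by Remark~\ref{facile} the number $h^1(\Ii_{T,\FF}(a,b))$ can only grow when a disjoint smooth conic is added, so enlarging a general union $T_0$ of $x_0<a(a-1)/2$ conics to a general union $T$ of $a(a-1)/2$ conics with $T_0\subset T$ gives $h^1(\Ii_{T_0,\FF}(a,b))\le h^1(\Ii_{T,\FF}(a,b))$, and it suffices to treat $x=a(a-1)/2$. Since $h^1$ is upper semicontinuous in flat families, it is then enough to produce \emph{one} flat degeneration $Z$ of a general union of conics with $h^1(\Ii_{Z,\FF}(a,b))=0$. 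For this I would use Remark~\ref{c5}: each conic degenerates flatly to a curve $F\cup G$ with $F$ of bidegree $(1,0)$, $G$ of bidegree $(0,1)$ and $F\cap G$ a point. Carrying out such degenerations while keeping all components disjoint, the problem enlarges to proving $h^1(\Ii_{Z,\FF}(a,b))=0$ for a general $Z\in\Cc(e,f,g)$ under a numerical condition specialising to $x\le a(a-1)/2$, which I would establish by induction on $a+b$ (with $b\ge a$).

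The inductive engine is the residual exact sequence attached to a divisor $X\in|\Oo_\FF(1,0)|$ (and symmetrically $Y\in|\Oo_\FF(0,1)|$),
\[
0\to \Ii_{\mathrm{Res}_X(Z),\FF}(a-1,b)\to \Ii_{Z,\FF}(a,b)\to \Ii_{Z\cap X,X}(a,b)\to 0,
\]
which bounds $h^1(\Ii_{Z,\FF}(a,b))$ by the sum of the $h^1$ of the residual term, of bidegree $(a-1,b)$, and of the trace term on $X\cong F_1$. The decisive geometric point is that, although two distinct smooth conics drawn on $X$ always meet — on $F_1$ they lie in the class $h+f$ with $(h+f)^2=1$ — the curves of bidegree $(0,1)$ on $X$ are exactly the fibres of the ruling (class $f$, by \eqref{coeff}) and hence are \emph{pairwise disjoint}, so arbitrarily many of them can sit disjointly on a single $X$. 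This is what makes the Horace step work: I would degenerate a controlled number of conics to $F\cup G$ and slide the bidegree-$(0,1)$ parts $G$ onto $X$ as distinct ruling fibres, pushing the bidegree-$(1,0)$ parts $F$ into the residual. Each such move \emph{absorbs} a conic into the divisor while preserving disjointness, at the price of creating a $(1,0)$-curve in the residual — which is exactly why one must work inside $\Cc(e,f,g)$ rather than with conics alone; the symmetric move slides $(1,0)$-parts onto a divisor $Y$.

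The trace terms become interpolation problems on the Hirzebruch surface $F_1$: disjoint unions of ruling fibres (class $f$), of conics (class $h+f$) and of finitely many points, tested against $\Oo_X(a,b)\cong\Oo_{F_1}(bh+(a+b)f)$. These are controlled by Lemma~\ref{h0X}, which yields $h^1(\Oo_X(a,b))=0$ and $h^0(\Oo_X(a,b))=a(b+1)+\binom{b+2}{2}$, together with the very ampleness from Remark~\ref{very ample}, so that general points — and, after peeling off the fibres, suitably general conics — impose independent conditions as long as the total imposed degree does not exceed $h^0(\Oo_X(a,b))$. The base of the induction is $a=1$, which forces $x=0$ and reduces to $h^1(\Oo_\FF(1,b))=0$ (proved as in Lemma~\ref{h0X}), together with finitely many small mixed configurations checked directly.

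The step I expect to be the main obstacle is the numerical bookkeeping, and it is exactly here that the bound $x\le a(a-1)/2$ is used: at each reduction $a\rightsquigarrow a-1$ one must absorb precisely $a-1$ conics onto $X$, so that the residual count telescopes as $a(a-1)/2-(a-1)=(a-1)(a-2)/2$, while the trace on $F_1$ stays within the range where independent conditions hold. One must keep $b\ge a$ available throughout (reducing $b$ toward $a$ by the $Y$-move, whose trace is purely points since a $(1,0)$-curve does not meet $Y$) and ensure that the $(1,0)$- and $(0,1)$-curves accumulating in the residual never overflow the available sections. A subtler point, beyond counting, is to check that the chosen degenerations have \emph{reduced limits with no embedded points}: at the node where an absorbed $F$ meets $X$ while its companion $G\subset X$, one must verify that $\mathrm{Res}_X$ and the trace acquire no spurious embedded structure, which is ensured by the transversality built into the disjoint-ruling degeneration.
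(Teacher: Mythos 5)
Your proposal is correct and follows essentially the same route as the paper's proof: reduction to the extremal case $x=a(a-1)/2$ via Remark~\ref{facile}, degeneration of conics to $F\cup G$ (Remark~\ref{c5}) combined with semicontinuity, Horace residual exact sequences with respect to divisors of bidegree $(1,0)$ and $(0,1)$ exploiting that the $(0,1)$-components sit as pairwise disjoint ruling fibres on $X\cong F_1$, trace terms controlled by Lemma~\ref{h0X}, and the telescoping count $a(a-1)/2\mapsto(a-1)(a-2)/2$. The only (immaterial) difference is organizational: the paper runs the induction upward on $a$ with $a=b$ and then separately on $b-a$, keeping $(1,0)$-curves out of the residual by ordering the two Horace steps, whereas you fold everything into a single induction on $a+b$ over the mixed classes $\Cc(e,f,g)$.
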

\begin{proof}
Thanks to Remark~\ref{facile}, it is sufficient to prove the statement for the case $x=a(a-1)/2$.
We first apply induction on $h=a-b$. If $h=0$  we need to prove that
\begin{equation}\label{a,a}
h^1(\Ii _{T,\FF}(a,a)) =0.\end{equation}
To prove Formula~\eqref{a,a}, we use induction on the integer $a$, the case $a=1$ being trivial since $x=0$. 
Take $a\ge1 $ and assume that  \eqref{a,a} holds for the integer $a$. Take a general $E\in \Cc(x)$.
Let $X$ be a {general} surface of bidegree $(1,0)$ and recall that it is birational to a Hirzebruch surface $F_1$, see Section~\ref{preliminaries}.

By Remark~\ref{intersezioni}, we get that $X\cap E$ is formed by $x$ general points of $X$. 
Take a general union $G\subset X$ of $a$ fibers of the ruling of $X$, which are curves of bidegree $(0,1)$.
Restricting now to the surface $Y$, we get as a trace
$(E\cup G)\cap Y=(E\cap Y)\cup G$
and as a residue
$\Res_Y(E\cup G)=E$, hence we get
the residual exact sequence:
\begin{equation}\label{eqc1}
0\to \Ii_{E,\FF}(a,a)\to \Ii _{E\cup G, \FF}(a+1,a)\to \Ii_{(E\cap X)\cup G,X}(a+1,a)\to 0.
\end{equation}

By using Formula~\eqref{coeff}, we have
$$\Oo _X(a+1,a) \cong \Oo_{F_1}(ah+(2a+1)f)$$ and 
 $$\Ii _{G,X}(a+1,a) \cong \Oo_{F_1}(ah+(a+1)f)\cong \Oo _X(1,a).$$  
 Thus, by Lemma \ref{h0X}, $h^1(\Ii _{G,X}(a+1,a))=0$
and 
$h^0(\Ii _{G,X}(a+1,a))\ge a(a-1)/2$. Hence, since the points in $E\cap X$ are general we get
$h^1(\Ii_{(E\cap X)\cup G,X}(a+1,a))=0$.

Therefore,
from
the exact sequence \eqref{eqc1} and
 by induction, we obtain $h^1(\Ii _{E\cup G}(a+1,a)) =0$.

{Consider now a {general} surface $Y$ of bidegree $(0,1)$, which is again birational to a Hirzebruch surface $F_1$.}
Recall that $X\cap Y$ is a smooth conic $C$ of bidegree $(1,1)$. In the identification of $Y$ with $F_1$ the curve $C\subset Y$ corresponds to a smooth element  of $|\Oo_{F_1}(h+f)|$,
and the set $G\cap Y$ is formed by $a$ general points of $C$. Let $F\subset Y$ be the union of the $a$ curves of bidegree $(1,0)$ containing one of the points of $G\cap Y$ and set $U:= E\cup G\cup F$. 
{The set $U$ belongs to the closure of $\Cc(x,a,a)$, because any component of $G$ meets one component of $F$. Moreover, thanks to Remark~\ref{c5}}, it is the flat limit of a flat family in $\Cc(x+a)$.
Hence, by semicontinuity,  to prove \eqref{a,a} for the integer $a+1$ it is sufficient to prove that $h^1(\Ii _{U,\FF}(a+1,a+1))=0$.

Restricting now to the surface $Y$, we get as a trace
$U\cap Y=(E\cap Y)\cup F$
and as a residue
$\Res_Y(U)=E\cup G$, hence we get
the following exact sequence:
\begin{equation}\label{eqc2}
0 \to \Ii_{E\cup G,\FF}(a+1,a)\to \Ii _{U,\FF}(a+1,a+1)\to \Ii _{(E\cap Y)\cup F,Y}(a+1,a+1)\to 0.
\end{equation}

Thus, again, up to identification of $Y$ with the Hirzebruch surface $F_1$, we have:
$$\Oo _Y(a+1,a+1) \cong \Oo_{F_1}((a+1)h+(2a+2)f)$$ and 
 $$\Ii _{F,Y}(a+1,a+1) \cong \Oo_{F_1}((a+1)h+(a+2)f)\cong \Oo _Y(a+1,1).$$ 
 Thus, by  Lemma \ref{h0X}, 
 we have $h^1(\Ii _{F,Y}(a+1,a+1))=0$
and 
$h^0(\Ii _{F,Y}(a+1,a+1))\ge  a(a-1)/2$. Then, since the points in $E\cap Y$ are general, we get
$h^1(\Ii _{(E\cap Y)\cup F,Y}(a+1,a+1))=0$, and we conclude by induction that \eqref{a,a} holds for any $a\ge1$.

Consider now $h=b-a\ge1$ and assume by induction that $h^1(\Ii_{T,\FF}(a,b))=0$, where $T$ is a general union of $x$ smooth conics.
Take
$$0\to \Ii_{T,\FF}(a,b)\to \Ii _{T, \FF}(a,b+1)\to \Ii_{T\cap Y,Y}(a,b+1)\to 0,$$
where $Y$ is a general surface of bidegree $(0,1)$.
Since $T\cap Y$ are $x$ general points we immediately have $h^1(\Ii_{T\cap Y,Y}(a,b+1))=0$ and we conclude.
\end{proof}

Thanks to the previous vanishing theorem, we are now able to prove the qualitative result on surfaces containing a prescribed numbers of conics
stated in Theorem~\ref{c1-seconda parte}.

%\begin{theorem}\label{c1-seconda parte}
%Fix integers $b\ge a\ge2$,  $0\le x\le (a-1)(a-2)/2$, and let $T\subset \FF$ be a general union of $x$ smooth conics.
%Then:
%\begin{enumerate}
%\item[(i)] a general $S\in |\Ii_{T,\FF}(a,b)|$ does not contain any smooth conic $C$ such that $C\cap T=\emptyset$;
%\item[(ii)] a general $S\in |\Ii_{T,\FF}(a,b)|$ is irreducible and smooth.
%\end{enumerate}
%\end{theorem}

\begin{proof}[Proof of Theorem~\ref{c1-seconda parte}]
We start by proving part (i).
Let $x\le (a-1)(a-2)/2$ and $T$ be a general union of $x$ smooth conics. 
Given a general $S\in |\Ii_{T,\FF}(a,b)|$, 
we want to prove that $S$ contains no other smooth conic $C$ such that $C\cap T=\emptyset$.

Assume the existence of such a  $C$ (and hence $C$ is not generic),
and take  a general surface $X$ of bidegree $(1,0)$ containing $C$. Up to the identification of $X$ with the Hirzebruch surface $F_1$, $C$ is an element of the linear sistem $|h+f|$. Since any two elements of $|h+f|$ meets, $X$ contains no connected component of $T$. Thus the residual exact sequence of $X$ gives the following exact sequence
\begin{equation}\label{eqccc1}
0 \to \Ii_{T,\FF}(a-1,b)\to \Ii_{T\cup C,\FF}(a,b)\to \Ii _{C\cup (T\cap X),X}(a,b)\to 0.
\end{equation}
By Theorem \ref{c1-prima parte}, we know that $h^1(\Ii_T(a-1,b))=0$. 

Up to the identification of $X$ with $F_1$, we have $C\in |h+f|$ and $$\Oo_X(a,b)\cong \Oo_{F_1}(bh+(a+b)f).$$ 
Hence, by \eqref{coeff},
$$\Ii _{C,X}(a,b)\cong \Oo_{F_1}((b-1)h+(a+b-1)f)\cong \Oo_X(a,b-1),$$
and {so $\Ii _{C\cup (T\cap X),X}(a,b)=\Ii _{(T\cap X),X}(a,b-1)$.
Since $T\cap X$ are $x$ general points and $x\le h^0(\Oo_X(a,b-1))$, by Lemma \ref{h0X} we conclude 
 that $h^1(X,\Ii _{T\cap X,X}(a,b-1))=0$. }

%QUESTA PARTE ROSSA VECCHIA MI SEMBRA INUTILE (ma meglio controllare!). sostituisco con la frase blu prima
%\rosso{Thus to prove Claim 1 it is sufficient to prove $h^1(F_1,\Ii _{T\cap X,X}((a-1)h+(2a-1)f))=0$, i.e. to prove $h^1(X,\Ii _{T\cap X,X}(a-1,a-1))=0$. }  
%\rosso{Consider the exact sequence
%\begin{equation}\label{eqccc12}
%0 \to \Ii_T(a-2,a-1)\to \Ii_T(a-1,a-1)\to \Ii _{(T\cap X),X}(a-1,a-1)\to 0
%\end{equation}
%We have $h^1(\Ii_T(a-1,a-1))=0$ by (i) for the integer $a-1$. The exact sequence $$0\to \Ii_T(a-2,a-1) \to \Oo_{\FF}(a-2,a-1)\to \Oo_T(a-2,a-1)) \to 0$$ gives $h^2(\Ii_T(a-2,a-1)) =h^1(\Oo_T(a-2,a-1))$. We have $h^1(\Oo_T(a-2,a-1))=0$, because
%$\deg (\Oo_R(a-2,a-1)) =2a-3$ for any $R\in \Cc(1,0,0)$. Hence the long cohomology exact sequence of \eqref{eqccc12} concludes the proof of Claim 1.
%}

 Thus we have proved that
$$h^1(X, \Ii _{C\cup (T\cap X),X}(a,b))=0,$$ 
%and we compute
 %$$h^0(X, \Ii _{C\cup (T\cap X),X}(a,a))=a^2+(a+1)a/2-(a-1)(a-2)/2=a^2+2a-1$$
and therefore, from sequence \eqref{eqccc1}
we get $h^1(\Ii_{T\cup C}(a,b))=0$. % and 
%$$h^0(\Ii _{T\cup C}(a,a)) =h^0(\Ii _T(a,a))-(a^2+2a-1).$$
By the exact sequence \eqref{ex-seq}, we also have
$$h^0(\Ii _{T\cup C}(a,b)) =h^0(\Ii _T(a,b))-(a+b+1).$$
{Recall that $\dim \Cc(1) =4$ and let $\widetilde\Cc(1)$ be the open subset of the smooth conics disjoint from $T$. 
To any $C \in \widetilde\Cc(1)$ we can associate
a subspace $|\Ii_{T\cup C}(a,b)|\subseteq |\Ii_T(a,b)|$ of codimension
$a+b+1$.
The union of all these subspaces has dimension less or equal than $\dim|\Ii_{T\cup C}(a,b)|+4< \dim|\Ii_{T}(a,b)|$,
since $a+b+1>4$.
Therefore,
a general $S\in |\Ii_T(a,b)|$ cannot contain any $C\in \widetilde\Cc$ and this contradicts our assumption.
}
\smallskip

We now prove part (ii).
By Theorem \ref{c1-prima parte}, we know that $h^1(\Ii _T(a-1,b-1)) =0$.
Using the exact sequence 
$$0\to \Ii_{T,\FF}(c,d)\to \Oo_{\FF}(c,d)\to \Oo_T(c,d) \to 0,$$
for $c=a-2, d=b-2$, we have that $h^2(\Ii_{T,\FF}(a-2,b-2)) =0$. Analogously we get $h^i(\Ii_{T,\FF}(a-i,b-i)) =0$ for $i\ge3$.
Hence by the Castelnuovo-Mumford's lemma $\Ii_{T,\FF}(a,b)$ is globally generated. Therefore, by Bertini's theorem, a general $X\in |\Ii_{T,\FF}(a,b)|$ is integral and smooth outside $T$. 

To conclude, it is sufficient to prove that for each $p\in T$
the set of all $X\in |\Ii_{T,\FF}(a,b)|$ singular at $p$ has codimension at least $2$ in $|\Ii_{T,\FF}(a,b)|$, i.e. 
$$h^0(\Ii _{T\cup 2p,\FF}(a,b)) \le h^0(\Ii _{T,\FF}(a,b))-2,$$
where we denote by $2p$ the $0$-dimensional scheme of $\FF$ defined by the ideal $\Ii_{p,\FF}^2$.
{Indeed in this case, since $\dim T=1$, the singular surfaces in $|\Ii _{T,\FF}(a,b)|$ are parametrized by a one-dimensional family of subspaces of codimension at least $2$, hence a general surface is smooth.}

Consider the exact sequence
$$0\to \Ii _{T\cup 2p,\FF}(a,b)\to \Ii _{T,\FF}(a,b)\to \Ii_{T,\FF}\otimes \Oo_{2p}(a,b)\to 0.$$
Since $\deg (2p) =4$ and $\deg (2p\cap T)=2$, {we have $h^0(\Ii_{T,\FF}\otimes \Oo_{2p}(a,b))\ge2$.}
Hence it is sufficient to prove that
$$h^1(\Ii _{T\cup 2p,\FF}(a,b)) =0.$$
 
Fix $p\in T$ and let $C$ be the connected component of $T$ containing $p$. Set the following notation $E:= T\setminus C$. By Theorem \ref{c1-prima parte} and sequence \eqref{ex-seq}, we have  $h^0(\Ii_{T,\FF}(a,b-1)) =h^0(\Ii_{E,\FF}(a,b-1)) -a-b$.
Thus $p$ is not in the base locus of $|\Ii_{E,\FF}(a,b-1)|$. 
Fix $S\in |\Ii_{E,\FF}(a,b-1)|$ such that $p\notin S$. 
Let $Y$ be surface of $|\Oo_{\FF}(0,1)|$ containing $C$ and consider the residual exact sequence with respect to $Y$:
\begin{equation}\label{eqcc1}
0 \to \Ii_{E\cup p,\FF}(a,b-1) \to \Ii _{T\cup 2p}(a,b)\to \Ii_{{(E\cap Y)}\cup C\cup (2p\cap Y),Y}(a,b)\to 0.
\end{equation}

Now we prove that 
\begin{equation}\label{1vanish}
h^1(\Ii _{E\cup p,\FF}(a,b-1)) =0.
\end{equation}

{Recall that $T=E\cup C$ and $p\in C$, hence we have the exact sequence
$$0 \to \Ii_{T,\FF}(a,b-1) \to \Ii _{E\cup p}(a,b-1)\to \Ii_{p,C}(a+b-1)\to 0.$$
Thanks to Theorem~\ref{c1-prima parte} we have that  $h^1(\Ii _{T,\FF}(a,b-1)) =0$; on the other hand,  since $C$ is a smooth rational curve, we have
$h^1(\Ii_{p,C}(a+b-1))=h^1(\Oo_{C}(a+b-2))=0$ and this proves \eqref{1vanish}.
}

%$h^0(\Ii _{E\cup  p,\FF}(a,b-1))
%= h^0(\Ii _{E,\FF}(a,b-1))-1$. 
%and
%$$0 \to  \Ii_{p,C}(a,b-1) \to \Oo _{C}(a,b-1)\to \Oo_{p}(a,b-1)\to 0.$$
%
%Since $h^1(\Ii _{T,\FF}(a,b-1)) =0$ (by part (i)) and $C$ is a connected component of $T$, the restriction map 
%$$\rho : H^0(\Ii _{E,\FF}(a,b-1))\to H^0(C,\Oo_C(a,b-1))$$ 
%is surjective. Since $C\cong \PP^1$ and $\deg (\Oo _C(a,b-1)) \ge 0$,
%$h^0(C,\Ii _{p,C}(a,b-1)) = h^0(C,\Oo _C(a,b-1))-1$. Since $\rho$ is surjective, we get $h^0(\Ii _{E\cup  p}(a,b-1))
%= h^0(\Ii _E(a,b-1))-1$.  
%
%\rosso{(qui si puo' fare un bel diagramma se vuoi)}
% 

In order to conclude it is sufficient to prove now that
\begin{equation}\label{2vanish}
\Ii_{(E\cap Y)\cup C\cup (2p\cap Y),Y}(a,b).
\end{equation}

{
Note that 
$\Ii_{C,Y}(a,b)\cong \Oo_{F_1}((a-1)h+(a+b-1)f)\cong \Oo_Y(a-1,b), $
hence, by Remark \ref{very ample}, we know that $\Ii_{C,Y}(a,b)$ is very ample.
Therefore we get
$h^1(\Ii_{C\cup (2p\cap Y),Y}(a,b))=0$. Since $E\cap Y$ are $x-1$ general points we conclude that \eqref{2vanish}
 holds.
}

%
%$h^1(Y,\Ii _{p,Y}(a,b-1)) =0$
%with $b\ge a$ is globally generated and $h^1(Y,\Ii _{p,Y}(a,b-1)) =0$. Since $\deg (\Oo_C(a,b))>0$ and $C\cong \PP^1$, $\Oo _C(a,b)$ is very ample and $h^1(\Oo_C(a,b)) =0$.
% Thus $h^1(C,\Ii_{(2p,C)}(a,b)) =0$. The residual exact sequence of $C$ in $Y$ gives $h^1(Y, \Ii_{C\cup (2p\cap Y)),Y}(a,b))=0$. }
% 
Thus the exact sequence \eqref{eqcc1} gives $h^1(\Ii _{T\cup 2p,\FF}(a,b))=0$, concluding the proof.
\end{proof}

\medskip

We now pass to apply the previous result to twistor fibers. We denote now by 
$$\Tt(k)\subset \Cc(k)$$ 
the set of all $(C_1,\dots ,C_k)$
such that each $C_i$ is a twistor fiber.
The action of the anti-holomorphic involution $j$, defined in Formula~\eqref{mapj}, on $\Cc(k)$ is given by $j((C_1,\dots ,C_k))=(j(C_1),\dots, j(C_k))$, and $\Tt(k)$ is the fixed locus with respect to this action.

We need to introduce the notion of {\it general} union of $k$ twistor fibers.
\begin{definition}
%\rosso{***tagliare}Fix an integer $k>0$ and set $\Pp$ to be a property which may be 
%true or false for any set of $k$ different twistor fibers. If $k\ge2$, we extend 
%$\Pp$ to all $\mathfrak{C}=(C_1,\dots ,C_k)\in\Tt(k)$, saying that 
%$\Pp$ is false at $\mathfrak{C}$ if $C_{m}=C_{n}$ for some $m\neq n$, while,
%if $C_{m}\neq C_{n}$ for all $m\neq n$, $\Pp$ is true at $\mathfrak{C}$
%if and only if it is true at $\{C_1,\dots ,C_k\}$. 
%
%\rosso{partire da qui***}
Following~\cite[Definition 3.1]{altavillaballico1}, we say that a property $\Pp$ is true for a \textit{general union} of $k$ twistor fibers or that $k$ \textit{general twistor fibers} satisfy $\Pp$
if there is a non-empty Zariski open subset $U$ of $\Cc(k)$ such that  $\Pp$ is true for all 
$(C_1,\dots ,C_k)\in U\cap \Tt(k)$,
{such that $C_{m}\neq C_{n}$ for any $m\neq n$.}
\end{definition}

In complete analogy with~\cite[Lemma 3.2]{altavillaballico1} we are able to state that $\Tt(k)$ is a Zariski dense of $\Cc(k)$. This observation, Theorem~\ref{c1-seconda parte} and the
fact that two twistor fibers are disjoint, lead us to the following result.

%**************
%
%\rosso{** mi pare che questo non serva}
%Let $\Tt(e,f,g)$ denote the set of all\\ $(C_1,\dots ,C_e,L_1,\dots ,L_f,U_1,\dots ,U_g)\in \Cc(e,f,g)$ such that
%each $C_i$ is a twistor line. Thus $\Tt(0,f,g)=\Cc(0,f,g)$. The set $\Cc(e,f,g)$ is a complex, smooth and connected quasi-projective variety of dimension $4e+2f+2g$. The set $\Tt(e,f,g)$ is a connected and smooth real analytic variety of real dimension $4e+4f+4g$ \footnote{(ho dimezzato e perche' e' la parte reale)}
%which is Zariski dense in $\Cc(e,f,g)$. \rosso{togliere fino a qui***}
%
%*******************

\begin{corollary}
Fix integers $b\ge a\ge2$, $0\le x\le (a-1)(a-2)/2$. The 
following statements hold true.
\begin{enumerate}
\item If $T\subset \FF$ is a general union of $x$ twistor fibers, then
 a general $S\in |\Ii_{T,\FF}(a,b)|$  is irreducible, smooth and contains exactly $x$ twistor fibers.
\item There exists a smooth irreducible surface $S$ of bidegree $(a,b)$ containing exactly $x$ twistor fibers.
\item Near $S$, the family of all the surfaces of bidegree $(a,b)$ is a smooth differentiable manifold of real dimension 
$2\left(\binom{a+2}{2}\binom{b+2}{2}-\binom{a+1}{2}\binom{b+1}{2} -x(a+b+1)\right) +4x.$
\end{enumerate}
\end{corollary}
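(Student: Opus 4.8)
The plan is to prove the three parts of the corollary by combining the density of $\Tt(k)$ in $\Cc(k)$ with Theorem~\ref{c1-seconda parte}, and then to carry out a dimension count for the final statement. First I would establish, in complete analogy with~\cite[Lemma 3.2]{altavillaballico1}, that $\Tt(k)$ is Zariski dense in $\Cc(k)$; this is the statement mentioned just before the corollary and allows us to transfer the genericity conditions of Theorem~\ref{c1-seconda parte} from general conics to general twistor fibers. Part (1) then follows: since $\Tt(x)$ is dense in $\Cc(x)$, a general union $T$ of $x$ twistor fibers is also a general union of $x$ smooth conics, so Theorem~\ref{c1-seconda parte} applies directly to give that a general $S\in|\Ii_{T,\FF}(a,b)|$ is irreducible, smooth and, by part~(i) of that theorem, contains no further conic (hence no further twistor fiber) disjoint from $T$. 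Since twistor fibers are pairwise disjoint, $S$ therefore contains exactly the $x$ prescribed twistor fibers. Part (2) is then immediate by taking any such $S$ and noting its existence is guaranteed by part~(1).

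For part (3), the plan is a two-step dimension computation. The first step is to compute the complex dimension of $|\Ii_{T,\FF}(a,b)|$, equivalently $h^0(\Ii_{T,\FF}(a,b))$. Using the exact sequence
\begin{equation*}
0\to \Ii_{T,\FF}(a,b)\to \Oo_{\FF}(a,b)\to \Oo_T(a,b)\to 0,
\end{equation*}
together with the vanishing $h^1(\Ii_{T,\FF}(a,b))=0$ from Theorem~\ref{c1-prima parte}, gives $h^0(\Ii_{T,\FF}(a,b))=h^0(\Oo_{\FF}(a,b))-h^0(\Oo_T(a,b))$. Here $h^0(\Oo_{\FF}(a,b))=\binom{a+2}{2}\binom{b+2}{2}-\binom{a+1}{2}\binom{b+1}{2}$ (computed as in~\cite{ABBS} from the sequence defining $\FF$ in $\PPPP$), and since each of the $x$ conics is rational of bidegree $(1,1)$ one has $h^0(\Oo_C(a,b))=a+b+1$, so $h^0(\Oo_T(a,b))=x(a+b+1)$. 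This yields the complex dimension of the full linear system of bidegree $(a,b)$ surfaces through $T$.

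The second step, and the genuinely delicate point, is to pass from the \emph{complex} dimension of the linear system to the \emph{real} dimension of the differentiable manifold of surfaces near $S$ and to explain the correction term $+4x$. The idea is that the relevant object is not a single linear system but the total space of the family obtained by also letting the $x$ twistor fibers vary: each twistor fiber moves in the real three-dimensional space $\Tt(1)$ (the $j$-invariant locus inside $\Cc(1)$, which has real dimension... here one must identify that $\Tt$ is cut out in the four complex dimensional $\Cc$ by the reality condition $m=\bar q$), so allowing $x$ fibers to move contributes real dimension to the family, while the fibers of the projection to the configuration space are the linear systems $|\Ii_{T,\FF}(a,b)|$ of complex dimension computed above. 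The main obstacle will be assembling these pieces correctly: one must check that near a general smooth $S$ the incidence variety projects smoothly onto the configuration space of twistor fibers, so that the total real dimension is the sum of twice the complex fiber dimension $2\left(h^0(\Ii_{T,\FF}(a,b))-1\right)$ and the contribution $4x$ coming from the real moduli of the $x$ twistor fibers, and to reconcile the bookkeeping so that the $-2$ from projectivization and the $+1$ per fiber combine into the stated formula $2\left(\binom{a+2}{2}\binom{b+2}{2}-\binom{a+1}{2}\binom{b+1}{2}-x(a+b+1)\right)+4x$. That the resulting space is a smooth manifold follows from part~(1): since the general such $S$ is smooth and contains exactly $x$ twistor fibers with $h^1(\Ii_{T,\FF}(a,b))=0$, the parameter space is unobstructed, so the family is a smooth differentiable manifold of the asserted real dimension.
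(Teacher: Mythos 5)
Your proposal follows essentially the same route as the paper, whose entire proof is the observation that $\Tt(k)$ is a connected smooth real analytic variety of real dimension $4k$, Zariski dense in $\Cc(k)$, combined with Theorem~\ref{c1-seconda parte} and semicontinuity; your more explicit fibration-over-$\Tt(x)$ dimension count is just an unpacking of that. One small correction: $\Tt(1)$ is real \emph{four}-dimensional (it is identified with $\PP^2$ via the twistor projection, i.e.\ the fixed locus of $j$ on the complex $4$-fold $\Cc$), consistent with the $+4x$ you correctly use in the final bookkeeping, not ``three-dimensional'' as you first write.
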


\begin{proof}
{Recall that the set $\Tt(k)$ is a connected and smooth real analytic variety of real dimension $4k$ which is Zariski dense in $\Cc(k)$.}
By Theorem \ref{c1-seconda parte} and semicontinuity, we get our statements.
\end{proof}
%
%A more explicit description is given by the following:
%\begin{corollary}
%Fix integers $b\ge a\ge2$, $0\le x\le (a-1)(a-2)/2$. \end{corollary}

%\rosso{*****vecchie formulazioni... da cancellare}
%\begin{corollary}
%Fix integers $a>0$ and $0\le x\le (a-1)(a-2)/2$. 
%Let $T\subset \FF$ be a general (SPIEGARE IN CHE SENSO) union of $x$ twistor lines.
%Then a general $X\in |\Ii_T(a,a)|$ is irreducible and contains exactly $x$ twistor lines.
%\end{corollary}
%More precisely we can state the following: ( UNIRE I DUE e dire qualcosa sulla frase in piu')
%\begin{theorem}\label{c2}
%Fix integers $a>0$ and $0\le x\le (a-1)(a-2)/2$. There is a smooth $X\in |\Oo_{\FF}(a,a)|$ containing exactly $x$ twistor lines
%and, near $X$, the family of all $E\in |\Oo_{\FF}(a,a)|$ is a smooth differentiable manifold of real dimension $2(\binom{a+2}{2}\binom{a+2}{2}-\binom{a+1}{2}\binom{a+1}{2} -x(2a+1)) +4x$
%\end{theorem}
%************************

\section{Surfaces with infinitely many twistor fibers}\label{infinite-sec}

The last results of the previous section introduce us to to the  study of surfaces containing infinitely elements
of $\Tt$.
 In particular, here we aim to describe a method to construct such kind of surfaces. Moreover, this result will also give
the existence of infinitely many examples.

We start with the following remark.

%\blue{io lo toglierei}
%\begin{remark}\label{f1}
%  Let $S\subset \FF$ be an integral surface of bidegree $(a,b)$.
%  Recall that the curves of bidegree $(1,1)$ of $\FF$ are embedded as conics by the Segre embedding, hence
%  the set of all integral curves of bidegree $(1,1)$ contained in $S$ forms finitely many algebraic families. 
%  \textcolor{red}{vedere se inserire questa parte prima, quando si parla di curve in $\FF$ in generale}
%   \textcolor{red}{ (maybe of dimension $0$)}
% \textcolor{red}{  By the theory of Chow variety of $S$ or the theory of the Hilbert scheme of $S$ we only need that they are embedded as curves of bounded degree in the Segre embedding of $\FF$.}
%Thus if $S$ contains infinitely many curves of bidegree $(1,1)$, then it contains a positive dimension family of smooth conics.
%In particular any desingularization $S'$ of $S$ is uniruled, i.e. it has Kodaira dimension $\kappa (S') =-\8$.
%In particular $\kappa (S') =-\8$ if $S$ contains infinitely many twistor lines.
%\textcolor{red}{Notice that the same argument can be adapted to the case of surfaces in $\mathbb{P}^{3}$, seen as twistor space of $\mathbb{S}^{4}$, containing infinitely many twistor lines.}
%\end{remark}

\begin{remark}\label{f1}
Let $S\subset \FF$ be an integral surface.
If $S$ contains infinitely many twistor fibers, then there is a positive dimensional family of
twistor fibers contained in $S$.
In fact, let $G\subset \Cc$ be any infinite set and let $\overline{G}$ be its Zariski closure in $\Cc$.
Since $G$ is infinite, $\overline{G}$ has a positive-dimensional component.  If every element of $G$ is contained in $S$, then every element of $\overline{G}$ is contained in $S$.

In particular any desingularization $S'$ of $S$ is uniruled, i.e. it has Kodaira dimension $\kappa (S') =-\8$.
In particular $\kappa (S) =-\8$ if $S$ contains infinitely many twistor fibers.
Notice that the same argument can be adapted to the case of surfaces in $\mathbb{P}^{3}$, seen as twistor space of $\mathbb{S}^{4}$, containing infinitely many twistor fibers \cite{altavillaballico3}.
\end{remark}

We now show that surfaces with isolated singularities containing infinitely many twistor fibers must have bidegree $(1,1)$.

\begin{proposition}\label{f4}
Let $S\in |\Oo _{\FF}(a,b)|$ be an integral surface such that $\mathrm{Sing}(S)$ is finite and $S$ contains infinitely many twistor fibers. Then $a=b=1$ and $S$ is smooth.
\end{proposition}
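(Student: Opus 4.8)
The plan is to reduce the statement to the numerical identity $a+b=2$ by pairing the canonical class of a resolution with a general twistor fiber, and then to pin down the bidegree and the smoothness separately, expecting the latter to be the real difficulty.

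First I would record the features of twistor fibers used throughout. Every twistor fiber $L_{q,\bar q}$ is a smooth (hence integral) rational conic, since $q\bar q=\sum_i|q_i|^2>0$; distinct twistor fibers are disjoint, being fibers of $\pi$; and through each point of $\FF$ passes exactly one of them. As $\mathrm{Sing}(S)$ is finite, only finitely many twistor fibers meet it, so infinitely many twistor fibers contained in $S$ avoid $\mathrm{Sing}(S)$. By Remark~\ref{f1} the Zariski closure $\overline{G}\subset\Cc$ of the twistor fibers lying on $S$ is positive dimensional and every conic it parametrises lies on $S$; fixing a positive dimensional irreducible component $\overline{G}_0$ containing infinitely many twistor fibers, the corresponding conics sweep out $S$.

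Let $\nu\colon S'\to S$ be a resolution of the finitely many singularities, with exceptional divisors $E_i$. For a general twistor conic $C\subset S$ avoiding $\mathrm{Sing}(S)$ the strict transform $\widetilde{C}$ is isomorphic to $C$, hence a smooth rational curve with $\widetilde{C}\cdot E_i=0$ for all $i$. The key numerical input is $\widetilde{C}^2=0$: the curves $\widetilde{C}_t$ ($t\in\overline{G}_0$) form an algebraic family, so they are numerically equivalent on $S'$, and two distinct twistor members avoiding $\mathrm{Sing}(S)$ are disjoint, so their common self-intersection is $0$. Adjunction on the smooth surface $S'$ then gives $K_{S'}\cdot\widetilde{C}=2g(\widetilde{C})-2-\widetilde{C}^2=-2$. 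On the other hand $S$ is Gorenstein with $\omega_S\cong\Oo_S(a-2,b-2)$ by \eqref{omegaF}, so by the projection formula together with $\widetilde{C}\cdot E_i=0$ we get $K_{S'}\cdot\widetilde{C}=\nu^*K_S\cdot\widetilde{C}=K_S\cdot C$; and since $C$ has bidegree $(1,1)$, Remark~\ref{curved1d2} yields $K_S\cdot C=(a-2)+(b-2)=a+b-4$. Comparing the two expressions forces $a+b=2$.

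It remains to exclude $(2,0)$ and $(0,2)$ and to prove smoothness. For $(2,0)$ (symmetrically $(0,2)$) one has $S=\pi_1^{-1}(D)$ with $D$ an integral plane conic, because $\pi_{1*}\Oo_\FF=\Oo_{\PP^2}$; a twistor fiber would force the line $\pi_1(L_{q,\bar q})=\{p\bar q=0\}$ to lie in $D$, impossible for an integral conic, so such surfaces carry no twistor fiber and are excluded, leaving $(a,b)=(1,1)$. For smoothness I would first note that $S$, an isolated–singularity hypersurface in the smooth threefold $\FF$, is normal (Cohen--Macaulay and regular in codimension one, hence normal by Serre), and that $S$ must be $j$-invariant: otherwise $S\cap j(S)$ would be a proper curve containing the infinitely many $j$-invariant twistor fibers of $S$, contradicting the bound of \cite[Proposition 8.1]{ABBS} of at most $a^2+ab+b^2=3$ conics. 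Then $\omega_S\cong\Oo_S(-1,-1)$ is anti-ample, so $S$ is a Gorenstein del Pezzo surface of degree $6$, and $j$ acts on $\FF$ without fixed points (no $(p,\ell)\in\FF$ has $p=\bar\ell$, else $\sum_i|\ell_i|^2=0$), so any singularities occur in $j$-pairs off the real locus. I expect smoothness to be the main obstacle: the numerical and fibration arguments above remain compatible with a singular (Du Val) del Pezzo of degree $6$, so excluding an isolated singularity seems to genuinely require the explicit geometry of the equation $\sum_{ij}a_{ij}p_i\ell_j=0$ cutting out $S$ and the description of which twistor fibers $L_{q,\bar q}$ it contains; the crux is to show that a genuine isolated singular point is incompatible with carrying an entire real one–parameter family of twistor fibers.
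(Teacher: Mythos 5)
Your reduction to $a+b=2$ is sound and is essentially the paper's argument in a different costume: the paper also selects a twistor fiber $D$ avoiding the finite set $\mathrm{Sing}(S)$, notes that $D$ moves in a positive-dimensional family inside the smooth locus so that $\deg N_{D/S}\ge 0$, and compares with $\deg\omega_D=-2$ via adjunction and $\omega_S\cong\Oo_S(a-2,b-2)$. The differences are minor: the paper first invokes \cite[Corollary 8.7]{ABBS} to get $j(S)=S$, hence $a=b$, and therefore never needs to exclude the bidegrees $(2,0)$ and $(0,2)$; you instead extract the equality $a+b=2$ from $\widetilde{C}^{\,2}=0$ on a resolution and then rule out $(2,0)$ and $(0,2)$ by hand (correctly: an integral $(2,0)$ surface is $\pi_1^{-1}(D)$ for an integral conic $D$ and cannot contain the line $\pi_1(L_{q,\bar q})$). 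Your density argument guaranteeing that two distinct twistor members lie in the ``general'' locus of the family is also fine, since the twistor fibers are Zariski dense in the component $\overline{G}_0$ by construction.

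The genuine gap is the final smoothness claim, which you explicitly leave open. After reducing to an integral $j$-invariant surface of bidegree $(1,1)$ with finite singular locus, you observe that your numerical and fibration arguments are still compatible with an isolated (e.g.\ Du Val) singularity, and you stop. This is exactly the step the paper closes by citing \cite[Lemma 4.1]{ABBS}, a smoothness/classification statement for integral $j$-invariant $(1,1)$ surfaces; without that input, or an equivalent direct analysis of the bilinear equation $\sum_{i,j}a_{ij}p_i\ell_j=0$ (after normalizing the matrix to be Hermitian using $j$-invariance), the conclusion ``$S$ is smooth'' is not established. As written, the proposal proves $a=b=1$ but not the full statement.
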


\begin{proof}
Since $S$ has infinitely many twistor fibers, thanks to~\cite[Corollary 8.7]{ABBS} we have $j(S)=S$ and hence $a=b$
By Remark \ref{f1} there is a positive dimensional family $\mathcal{A}$ of
twistor fibers contained in $S$.

%Since any two different twistor lines are disjoint and
%$\mathrm{Sing}(X)$ is finite, there is a twistor line $D\in \mathcal{A}$ such that $D\cap \mathrm{Sing}(X) =\es$. Since $D$ is
%contained in the smooth locus of $X$, $\mathrm{Sm}(X)$ and the normal bundle of X is $\Oo_{X}(a,b)$,
%the normal bundle sequence of the tangent bundles of $D$ 
%$$
%0\to T_{D} \to T_{X|D}\to N_{D,X|D} \to 0
%$$
%gives
%$$\omega_D\cong \omega_{X|D}\otimes N_{D,\mathrm{Sm}(X)}\cong \Oo_D(a-2,a-2)\otimes N_{D,\mathrm{Sm}(X)}.$$

Recall that since $\omega _{\FF} \cong \Oo _{\FF}(-2,-2)$, the adjunction formula gives $\omega _S\cong \Oo
_S(a-2,a-2)$. % even if $S$ is singular. 
 Since any two different twistor fibers are disjoint and
$\mathrm{Sing}(S)$ is finite, there is a twistor fiber $D\in \mathcal{A}$ such that $D\cap \mathrm{Sing}(S) =\emptyset$. Since $D$ is
contained in $\mathrm{Sm}(S)$, the smooth locus of $S$, the normal bundle sequence 
$$0 \to TD \to T(\mathrm{Sm}(S))_{|D} \to
N_D\to 0,$$
gives $\omega
_D\cong \Oo
_D(a-2,a-2)\otimes N_{D}$, where $N_D$ is the normal bundle of $D$ in $\mathrm{Sm}(S)$.

Since we may take as $D$
a general element of a positive-dimensional family of curves of $\mathrm{Sm}(S)$, the line bundle  $N_{D}$
has degree at least $0$. Since $D\cong \PP^1$, we have $\deg (\omega _D)=-2$. Thus $a<2$ and $S$ is a $j$-invariant  $(1,1)$ surface.  Finally, by \cite[Lemma 4.1]{ABBS} we conclude that $S$ is smooth.
\end{proof}

Hence, in order to find other examples of surfaces in $\FF$ containing infinitely many twistor fibers, we must look at  surfaces which have singular locus of positive dimension. In the case of surfaces of bidegree $(1,1)$ the analysis is
quite simple.
We recall that a reducible $(1,1)$-surfaces is the union of two (Hirzebruch) surfaces of bidegrees $(1,0)$ and $(0,1)$, respectively, intersecting on a smooth conic (see~\cite[Section 3.2 and Proposition 4.5]{ABBS}). In the next remark,
we show that there are not reducible surfaces of bidegree $(1,1)$ containing infinitely many twistor fibers.

\begin{remark}
Let $S$ be a reducible surface of bidegree $(1,1)$. Write $S = X\cup Y$ with $X\in |\Oo _{\FF}(1,0)|$ and $Y\in |\Oo _{\FF}(0,1)|$. Any irreducible curve $T\subset S$
is contained either in $X$ or in $Y$. We have $j(X)\in |\Oo _{\FF}(0,1)|$ and $j(Y)\in |\Oo _{\FF}(1,0)|$. Thus if $j(T)=T$, then either $T\subseteq j(X)\cap X$ or $T\subseteq j(Y)\cap Y$. 
Since $j(X)\cap X$ and $j(Y)\cap Y$ have dimension $1$, $X$ contains at most one twistor fiber. This case happens exactly when $S=j(S)$.
\end{remark}

From Proposition~\ref{f4} we know that if $a\geq 2$ any $(a,a)$-surface containing infinitely many twistor fibers  has singular locus containing a curve. We are now ready to prove the main theorem of this section, describing an explicit construction of such a surface, for any $a\ge2$ .

\begin{theorem}\label{infinite}
Let $C$ be a smooth and connected complex projective curve {of genus $g$} defined over $\RR$ and with $C_\RR \ne \emptyset$. {For any $a\ge g+2$,} 
 there exists an integral surface $S\in |\Oo_{\FF}(a,a)|$ containing
infinitely many twistor fibers and birational to $C\times \CC\PP^1$.
\end{theorem}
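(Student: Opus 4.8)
The plan is to realize $S$ as the union of a one–parameter family of smooth conics swept out along a curve in the parameter space of conics, chosen so that its \emph{real} points encode twistor fibers. Recall that the conics $L_{q,m}$ are parametrized by $(q,m)\in\PP^2\times\PP^2$ (staying off $\FF$), and that $L_{q,m}$ is a twistor fiber exactly when $m=\bar q$. A direct computation from $j(p,\ell)=(\bar\ell,\bar p)$ gives $j(L_{q,m})=L_{\bar m,\bar q}$, so the anti-holomorphic involution $\sigma(q,m)=(\bar m,\bar q)$ of $\PP^2\times\PP^2$ has the locus of twistor parameters $\{(q,\bar q)\}$ as its fixed locus. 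I will build, out of $C$, an irreducible algebraic curve $\Gamma\subset\PP^2\times\PP^2$ of bidegree $(a,a)$ that is $\sigma$-invariant and birational to $C$, and set $S$ to be the closure of $\bigcup_{(q,m)\in\Gamma}L_{q,m}$; each fixed point of $\sigma$ on $\Gamma$ then contributes a twistor fiber.

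To construct $\Gamma$ I use the real structure $\tau$ on $C$ with $C_\RR=\mathrm{Fix}(\tau)\neq\emptyset$. Fix a real line bundle $L$ on $C$ of degree $a$. Since $a\ge g+2$, Riemann--Roch gives $h^0(C,L)\ge a-g+1\ge 3$, so a general real net in $|L|$ is base-point-free and defines a morphism $\phi\colon C\to\PP^2$ with $\phi^*\Oo(1)=L$. Put $\psi:=\overline{\phi\circ\tau}$; as $\tau$ and conjugation are both anti-holomorphic, $\psi$ is again a holomorphic map $C\to\PP^2$ with $\deg\psi^*\Oo(1)=a$. Let $\Gamma$ be the image of $(\phi,\psi)\colon C\to\PP^2\times\PP^2$. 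A short check gives $\sigma(\phi(t),\psi(t))=(\phi(\tau t),\psi(\tau t))$, so $\Gamma$ is $\sigma$-invariant, while for $t\in C_\RR$ one has $\psi(t)=\overline{\phi(t)}$, i.e.\ $(\phi(t),\psi(t))$ is a twistor parameter. As $C_\RR$ is an infinite real $1$-manifold, this yields infinitely many twistor fibers in $S$; and since $\Gamma$ is birational to $C$ and $S$ is ruled over $\Gamma$ by the conics $\cong\PP^1$, $S$ is birational to $C\times\CC\PP^1$.

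It remains to compute $\bdeg(S)$ and check integrality. Intersecting $S$ with a general $(1,0)$-fiber $F=\pi_1^{-1}(p_0)$: a general point of $S\cap F$ lies on a single conic $L_{q,m}$ of the family, and $(p_0,\ell)\in L_{q,m}$ forces $p_0m=0$ together with $\ell=q\times p_0$; conversely each $(q,m)\in\Gamma$ with $p_0m=0$ produces exactly one such point. Hence the first bidegree of $S$ equals $\#\{(q,m)\in\Gamma:p_0m=0\}=\Gamma\cdot\Oo(0,1)=a$. Symmetrically, a general $(0,1)$-fiber $G=\pi_2^{-1}(\ell_0)$ gives the second bidegree $\Gamma\cdot\Oo(1,0)=a$, so $\bdeg(S)=(a,a)$. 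Irreducibility of $S$ follows from that of the $\PP^1$-bundle over the irreducible curve $\Gamma$, and $S$ is singular as it must be by Proposition~\ref{f4}.

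The main obstacle is the genericity securing that these identifications are exact. For a general choice of $L$ and of the net I must guarantee that: (i) $(\phi,\psi)$ is birational onto $\Gamma$, so that $\Gamma\cong C$ birationally and $S$ is birational to $C\times\CC\PP^1$; (ii) the natural map from the $\PP^1$-bundle over $\Gamma$ to $S$ is birational, so a general point of $S$ lies on a single conic and the class of $S$ is the reduced one, of bidegree exactly $(a,a)$ rather than a multiple; and (iii) distinct real points of $\Gamma$ give distinct twistor fibers, so that there really are infinitely many. Each is a general-position statement, and it is precisely the hypothesis $a\ge g+2$ (forcing $\dim|L|\ge 2$ and enough sections to separate points of $C$ once the conjugate map $\psi$ is adjoined to $\phi$) that makes (i)--(ii) work; checking these, together with excluding degenerations of the family of conics (for instance all conics sharing a common point or meeting a fixed curve), is the technical heart of the argument.
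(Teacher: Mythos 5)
Your proposal is correct and follows essentially the same route as the paper: both build the $j$-invariant parameter curve in $\PP^2\times\PP^2$ from a degree-$a$ real line bundle with $h^0\ge 3$ (your $\psi=\overline{\phi\circ\tau}$ is exactly the paper's map $f_{\overline V}$ from the conjugate net), sweep out $S$ as the union of the conics $L_{q,m}$ over that curve, get infinitely many twistor fibers from the infinitude of $C_\RR$, and compute the bidegree by intersecting with fibers of the projections. The only cosmetic differences are that the paper deduces $d_1=d_2$ from $j$-invariance and computes one intersection number, while you compute both directly, and that the paper is equally brief on the genericity/birationality points you flag at the end.
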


\begin{proof}

 By Remark \ref{f1} any surface $S\subset \FF$ containing infinitely many twistor fibers is birationally ruled by conics and
hence it is birational over $\CC$ to $C\times \CC\PP^1$ for some smooth projective curve $C$.

Fix a general non-special 
very ample line bundle $\mathcal L$ of degree $a$ on $C$ defined over $\RR$. % (the hypothesis for $\Ll$ to be non-special, i.e. $h^{1}(C,\Ll)=0$ is assumed just to simplify the proof). 
{Since $a\ge g+2$, then $h^0(C,\mathcal L) \ge {3}$. }
Take a general $3$-dimensional real subspace $V_\RR\subseteq H^0(C,\Ll)_\RR$ and let $V=V_\RR\otimes_\RR \CC \subseteq H^0(C,\Ll)=H^0(C_\RR\otimes _{\RR}\CC,\Ll)$ (where the last equality can be found for instance in~\cite[Chapter III, Proposition 9.3 and 9.4]{Hartshorne}). 
Thus $V$ induces a morphism $f_V: C\to \PP^2$ %over $\CC$ 
{onto a degree $a$ plane curve.}
The conjugate subspace $\overline{V}$ is well-defined and gives a complex holomorphic map $f_{\overline{V}}: C\to \PP^2$. 
Notice that, by construction, we have 
\begin{equation}\label{fv}
f_V(p)=\overline{f_{\overline{V}}(\overline p)}.
\end{equation}
Now, the map, 
$$f=(f_V,f_{\overline{V}}): C\to \PP^2\times \PP^2$$ 
induces a morphism birational onto its image, and the curve $T:= f(C)\subset \PP^2\times \PP^2 $ satisfies $j(T)=T$. 
Moreover, thanks to Formula~\eqref{fv} any real point of $C$ gives a point of $T$ corresponding to a twistor fiber.
%Bisogna mostrare che $T$ ha infinite twistor lines. Take a general $p\in C(\RR)$. Thus $f_V(p)$ is the kernel of the evaluation map e quindi dovrebbe essere lo stesso di $f_{\overline{V}}(p)$. Quinfi $f(p)$ dovrebbe essere una twistor line.
Since $C$ is a smooth curve defined over $\RR$ with at least a real point, then $C_\RR$ contains infinitely many real points by \cite{gh} and therefore, we conclude that $T$ parametrizes infinitely many twistor fibers.

Now we want to show that the surface 
$$S=\bigcup_{(q,m)\in T} L_{q,m}$$
has bidegree $(a,a)$, where $a= \deg(f_V(C))= \deg(L)$. 
First of all, thanks to~\cite[Corollary 8.8]{ABBS}, it is clear that, if $S\in|\Oo_{F}(a,b)|$, then $a=b$.

In order to compute the bidegree of $S$ we need to intersect with
$\pi^{-1}_2(n)$ where $n$ is a general point of the second $\PP^2$ and represents a general line in the first $\PP^2$.
Now
$$S\cap \pi^{-1}_2(n)=\{(p,n): pm=0, pn=0, qn=0, (q,m)\in T\}$$
Let be $q_1,\ldots, q_a$ the $a$ points of intersection between the curve $f_V(C)$ and the line corresponding to $n$.
For any $1\le i\le a$, let $m_i$ be such that $(q_i,m_i)\in T$ and set $p_i$ the intersection of the lines corresponding to $m_i$ and $n$. Hence it is clear that
$S\cap \pi^{-1}_2(n)=\{(p_1,n), \ldots, (p_a,n)\}$. This proves that the bidegree of $S$ is $(a,a)$.
\end{proof}

Thanks to Theorem~\ref{infinite} and to~\cite[Theorem 7.2]{ABBS}, we can state the following corollary.  

\begin{corollary}%\label{oo1}
For any $a>0$, there is an integral surface $X\subset\FF$ of bidegree $(a,a)$ containing
infinitely many twistor fibers.
\end{corollary}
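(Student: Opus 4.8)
The plan is to deduce the statement from Theorem~\ref{infinite} by making the most economical choice of base curve, and then to treat the single value of $a$ that the theorem cannot reach. The point is that Theorem~\ref{infinite} produces, for a genus-$g$ real curve $C$ with real points, surfaces of bidegree $(a,a)$ for every $a\ge g+2$; to obtain the widest possible range of $a$ one should therefore take $C$ of \emph{minimal} genus.

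First I would apply Theorem~\ref{infinite} with $C=\PP^1$, viewed as a curve defined over $\RR$: it is smooth, connected, and its real locus $(\PP^1)_\RR$ is certainly non-empty. Since $\PP^1$ has genus $g=0$, the hypothesis $a\ge g+2$ becomes simply $a\ge 2$. Hence, for every $a\ge 2$, the theorem yields an integral surface $S\in|\Oo_{\FF}(a,a)|$, birational to $\PP^1\times\CC\PP^1$, that contains infinitely many twistor fibers. This settles all cases except $a=1$. It then remains to exhibit a surface of bidegree $(1,1)$ with the required property. By Proposition~\ref{f4}, such a surface is necessarily smooth and $j$-invariant, and the existence of a smooth $j$-invariant $(1,1)$ surface carrying infinitely many twistor fibers is precisely the content of~\cite[Theorem 7.2]{ABBS}. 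Combining this boundary case with the previous range covers every $a>0$, which is the assertion.

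The main obstacle, such as it is, is conceptual rather than computational: Theorem~\ref{infinite} can \emph{never} reach $a=1$, because $g+2\ge 2$ for every admissible base curve $C$, so the value $a=1$ does not follow from the construction of the previous section and genuinely requires the independent input of~\cite[Theorem 7.2]{ABBS}. Apart from this, the only thing to verify is that specializing Theorem~\ref{infinite} to $C=\PP^1$ still produces an \emph{integral} surface; this is automatic, since the map $f=(f_V,f_{\overline V})\colon C\to\PP^2\times\PP^2$ is birational onto its image $T=f(C)$ and $S=\bigcup_{(q,m)\in T}L_{q,m}$ is ruled by smooth conics over the integral curve $T$.
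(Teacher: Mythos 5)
Your proof is correct and follows the same route as the paper: Theorem~\ref{infinite} applied with a genus-$0$ real curve handles all $a\ge 2$, and the case $a=1$ is supplied by the smooth $j$-invariant $(1,1)$ surface of \cite[Theorem 7.2]{ABBS}, which is exactly the pair of ingredients the paper invokes. Your additional remarks (that $a=1$ is genuinely out of reach of the construction, and that integrality persists for $C=\PP^1$) are accurate elaborations of what the paper leaves implicit.
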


We conclude with a topological observation on the set of twistor
fibers contained in a surface containing infinitely many twistor fibers.

\begin{remark}
It is well-known that
the real locus $C_\RR$ of a curve $C$ of genus $g$ consists of $n\le g+1$ disjoint circles, see
\cite[Proposition 3.1]{gh}.
Thus
from the construction in the proof of Theorem \ref{infinite}, we know that, for any $a\ge n+1$, there exists 
an integral surface $S\in |\Oo_{\FF}(a,a)|$ containing
infinitely many twistor fibers and such that the family of twistor fibers is parametrized by $n$ disjoint circles,
up to finitely many identifications (all over 
real points of $\mathrm{Sing}(C)$) and up to finitely many isolated points (at most $|\mathrm{Sing}(C)|$).
\end{remark}

\section*{Bibliography}%\vskip-20pt

\addtocontents{toc}{\protect\setcounter{tocdepth}{-1}}

%\frenchspacing

\end{document}